\documentclass[letterpaper]{amsart}
\usepackage{fullpage}
\usepackage{mathtools}
\usepackage{amsthm,amsmath,amssymb,amscd}
\usepackage{booktabs}
\usepackage[hyperfootnotes=false, colorlinks, linkcolor={blue},
citecolor={magenta}, filecolor={blue}, urlcolor={blue}]{hyperref}
\usepackage{resizegather}
\usepackage{lineno}
\usepackage{adjustbox}

\allowdisplaybreaks
\newcommand{\Q}{{\mathbb Q}}

\newcommand{\Z}{{\mathbb Z}}

\newcommand{\C}{{\mathbb C}}

\newcommand{\p}{\mathfrak p}
\newcommand{\OF}{{\mathfrak o}}
\newcommand{\GL}{{\rm GL}}

\newcommand{\GSp}{{\rm GSp}}

\newcommand{\tr}{{\rm tr}}
\newcommand{\nm}{{\rm n}}

\newcommand{\Mat}{{\rm M}}

\newcommand{\SSp}{{\rm Sp}}

\newcommand*{\transp}[2][-3mu]{\ensuremath{\mskip1mu\prescript{\smash{\mathrm t\mkern#1}}{}{\mathstrut#2}}}

\newtheorem{lemma}{Lemma}[section]
\newtheorem{theorem}[lemma]{Theorem}
\newtheorem{corollary}[lemma]{Corollary}
\newtheorem{proposition}[lemma]{Proposition}

\newtheoremstyle{named}{}{}{\itshape}{}{\bfseries}{.}{.5em}{\thmnote{#3}}
\theoremstyle{named}

\begin{document}
\date{\today}
\title{Further results on the paramodular Hecke algebra}
\author{Jennifer Johnson-Leung}
\address{Department of Mathematics and Statistical Science\\University of Idaho\\Moscow, ID USA}
\email{jenfns@uidaho.edu}
\author{Joshua Parker}
\address{Division of Mathematics and Natural Sciences\\Elmira College\\Elmira, NY USA}
\email{jparker@elmira.edu}
\author{Brooks Roberts}
\address{Department of Mathematics and Statistical Science\\University of Idaho\\Moscow, ID USA}
\email{brooksr@uidaho.edu}

\begin{abstract} In this paper we further examine the structure of the graded paramodular Hecke algebra~$\mathcal H$. We define a commutative subring~$\mathcal{H}^\alpha$ of~$\mathcal{H}$ as the elements fixed by a certain automorphism~$\alpha$, and we show that the elements~$T(q^k)$,
defined as the sum of the double cosets of degree~$k$, are contained in~$\mathcal{H}^\alpha$.
We prove that the formal power series in the indeterminate~$t$ with coefficients~$T(q^k)$ is an
explicit rational function in~$t$ with coefficients from~$\mathcal{H}^\alpha$.
We also determine the structure of~$\mathcal{H}^\alpha$ and its relation to~$\mathcal{H}$.

\end{abstract}
\subjclass[2020]{11F60, 11F46, 11F66, 20C08}
\keywords{Hecke algebra, paramodular, rationality, recursion, quadratic algebra, quaternion algebra}
\maketitle

\section{Introduction}\label{sec1}

In this article we build upon our previous work~\cite{JLPR}
on the paramodular Hecke algebra ~$\mathcal{H}$ over a nonarchimedean local field~$F$ of characteristic zero with ring of integers~$\OF$, maximal ideal~$\p = (\varpi)  \subset \OF$,
and residual degree~$q$.
As in~\cite{JLPR},~$\mathcal{H}$ is the graded $\Z$-algebra spanned by the
double cosets in~$K\backslash\Delta/K$,
where~$K$ is the paramodular group of level~$\p$
and~$\Delta$ is a certain semigroup in~$\GSp(4,F)$.
The work~\cite{JLPR} described~$\mathcal{H}$
in terms of four generators and seven relations and also determined
the center of~$\mathcal{H}$.

The first main result of this current work solves a problem that was partially addressed in~\cite{JLPR}.
For each non-negative integer~$k$, let~$T(q^k)$ be the sum of all the double cosets
in~$\mathcal{H}$ of degree~$k$.
In applications to Siegel modular forms, the elements~$T(q^k)$ define natural operators,
and in analogy to results from other settings, one is led to consider
the formal power series
\begin{equation}
\label{formalpowerserieseq}
\sum_{k=0}^\infty T(q^k)t^k,
\end{equation}
in the indeterminate~$t$ and ask, is this power series a rational function in~$t$?
Since~$\mathcal{H}$ is non-commutative this question is somewhat ill-posed.
In~\cite{JLPR} we addressed this question and issue by instead considering
the formal power series~$\sum_{k=0}^\infty \chi(T(q^k)) t^k$ where~$\chi$
is a homomorphism from~$\mathcal{H}$ to~$\C$, and proving that such power
series are rational.
Our current work takes a different approach.
We define a certain maximal commutative~$\Z$-algebra~$\mathcal{H}^\alpha$
of~$\mathcal{H}$ as the fixed points of a certain automorphism~$\alpha$ of~$\mathcal{H}$,
and we prove that~$\mathcal{H}^\alpha$ is commutative;
we also prove that each of the elements~$T(q^k)$ is contained in~$\mathcal{H}^\alpha$.
As a consequence, since~$\mathcal{H}^\alpha$ is commutative,
the question of the rationality of~$\sum_{k=0}^\infty T(q^k)t^k$
is now well-posed.
We prove in Section~\ref{rationalitysec} that~$\sum_{k=0}^\infty T(q^k)t^k$
is indeed a rational function in~$t$, and we explicitly describe
polynomials~$P(t)$ and~$Q(t)$ with coefficients from~$\mathcal{H}^\alpha$
such that~$\sum_{k=0}^\infty T(q^k)t^k = P(t)Q(t)^{-1}$.

Our second main result determines the structure of~$\mathcal{H}^\alpha$
and the relation of~$\mathcal{H}^\alpha$ to~$\mathcal{H}$.
In Section~\ref{intermediatealgsec} we prove that~$\mathcal{H}^\alpha$
is commutative and is generated by three elements that are algebraically
independent over~$\Z$.
Finally, in Section~\ref{quaternionalgsec}, we show that the
center~$Z(\mathcal{H})$, the commutative algebra~$\mathcal{H}^\alpha$,
and~$\mathcal{H}$ satisfy a familiar pattern. Namely, we define
a graded commutative polynomial ring~$R$ (isomorphic to~$\Z[2^{-1}] \otimes_\Z Z(\mathcal{H})$),
a graded~$R$-algebra~$S$ with involution~$\sigma \colon S \to S$
(isomorphic to~$\Z[2^{-1}] \otimes_\Z \mathcal{H}^\alpha$), and
an element~$z$ in~$R$ such that~$\Z[2^{-1}] \otimes_\Z \mathcal{H}$
is isomorphic to the graded quaternion algebra~$A$ over~$R$ defined by~$\sigma$,~$z$, and~$S$;
we also prove that~$A$ is split. As a consequence, we obtain
a concrete embedding of~$\Z[2^{-1}] \otimes_\Z \mathcal{H}$ into~$\Mat(2,R)$.

An important underpinning of our work is
the calculation of  explicit multiplication tables
involving the generators of~$\mathcal{H}^\alpha$ and elements of the form~$C+\alpha(C)$
for~$C$ a double coset in~$K \backslash \Delta / K$: see Table~\ref{XHptable} and Table~\ref{YHptable}.
These tables in turn depend on Table~2 and Table~3 of~\cite{JLPR}
which are repeated below as Table~\ref{phatable1} and Table~\ref{phatable2}.

We expect that our detailed results about $\mathcal{H}$ may provide a useful example for the general study of the Hecke algebras corresponding to parahoric subgroups of other reductive algebraic groups defined over a local field. Additionally, the explicit results of this paper will be useful for computational investigations of paramodular Siegel modular forms. The authors have no conflicts of interest, generative AI was not used in the research or writing of this paper, and there are no data associated with this work.

\section{Notation and previous results}
\label{sec2}
\label{notationandpreviousresultssec}
In this section we recall some needed definitions and results from our previous paper~\cite{JLPR}.
Let~$F$ be a fixed nonarchimedean field of characteristic zero,
let~$\OF$ be the ring of integers of~$F$,
let~$\p$ be the prime ideal of~$\OF$,
let~$\varpi$ be a generator of~$\p$,
and let~$\OF/\p$ have order~$q$.
If~$x \in F^\times$, then~$x$ has a unique expression~$x=u \varpi^k$
where~$u \in \OF^\times$ and~$k \in \Z$; we define~$v(x) =k$.
Let
\begin{equation}
J=
\begin{bsmallmatrix}
&&1&\\
&&&1\\
-1&&&\\
&-1&&
\end{bsmallmatrix},
\end{equation}
and let~$G=\GSp(4,F)$ be the group of~$g \in \GL(4,F)$ for
which there exists~$\lambda \in F^\times$ such that~$\transp{g} J g = \lambda J$;
such a~$\lambda$ is uniquely determined and will be
denoted by~$\lambda (g)$.
We regard~$G$ as a group of td-type (see~\cite{Cart}).
We denote the center of~$G$ by~$Z$;~$Z$ consists of the
elements~$\mathrm{diag}(z,z,z,z)$ for~$z \in F^\times$.
We let~$G_1=\SSp(4,F)$ be the subgroup of~$g \in G$ with~$\lambda(g)=1$.
Let
\begin{equation}\label{delta}
\Delta =  G \cap
\begin{bsmallmatrix}
\OF & \OF & \p^{-1} & \OF\\
\p & \OF & \OF & \OF\\
\p & \p & \OF & \p \\
\p & \OF & \OF & \OF
\end{bsmallmatrix}.
\end{equation}
The set~$\Delta$ contains~$1$ and is a semi-group contained in~$G$.
If~$g \in \Delta$, then a calculation shows that~$\det(g) \in \OF$
so that~$\lambda(g) \in \OF$ and thus~$v(\lambda(g)) \geq 0$.
We define
\begin{equation}
\label{Kdefeq}
K = \left\{ k \in \Delta \colon \lambda(k) \in \OF^\times\right\}
=\left\{ k \in \Delta \colon v(\lambda(k)) =0 \right\}.
\end{equation}
Then $K$ is a maximal compact open subgroup of $G$, and we refer
to $K$ as the \emph{paramodular subgroup} of $G$ of level $\p$.

We let~$\mathcal{H}$ be the ring~$\mathcal{R}(K,\Delta)$
as defined in Sect.~3.1, p.~54 of \cite{S1}.
As a set,~$\mathcal{H}$ consists of the formal
sums~$\sum_{C \in K \backslash \Delta / K} j(C) C$
with~$j(C) \in \Z$ and~$j(C)=0$ for all but finitely many~$C \in K \backslash \Delta / K$.
The addition for~$\mathcal{H}$ is defined in the obvious fashion.
The multiplication for~$\mathcal{H}$ is described in Section~2 of~\cite{JLPR};
note that the double coset~$K\cdot 1 \cdot K =K$ is the multiplicative identity of~$\mathcal{H}$.
The ring~$\mathcal{H}$ is naturally a~$\Z$-algebra, and we refer to~$\mathcal{H}$ as the
\emph{paramodular Hecke algebra}.
The~$\Z$-algebra~$\mathcal{H}$ admits a natural grading
\begin{equation}
\mathcal{H} = \oplus_{k=0}^\infty \mathcal{H}_k, \qquad
\mathcal{H}_k \cdot \mathcal{H}_j \subset \mathcal{H}_{k+j}, \quad k,j \in
\Z_{\geq 0}.
\end{equation}
Here, for~$k \in \Z_{\geq 0}$,~$\mathcal{H}_k$ is the~$\Z$-linear span in~$\mathcal{H}$
of the~$KgK$ with~$g \in \Delta$ and~$v(\lambda(g))=k$.

The elements of~$K \backslash \Delta / K$ admit a convenient description.
Let
\begin{align}
D_\Delta & =  \left\{ (a,b,c) \in \Z^3 \colon 0 \leq a,\  0 \leq b,\  0 \leq c-a,\  0 \leq c-b \right\}, \\
S_\Delta &= \left\{ (a,b,c) \in D_\Delta \colon  a \leq c-a,\  b \leq c-b\right\}.
\end{align}
The sets~$D_\Delta$ and~$S_\Delta$ are monoids under addition.
For~$a,b,c \in \Z$, let~$d(a,b,c) =\mathrm{diag}(\varpi^a,\varpi^b,\varpi^{c-a},\varpi^{c-b})$;
evidently,~$d(a,b,c) \in G$.
We have~$d(a,b,c) \in \Delta$ if and only if~$(a,b,c) \in D_\Delta$.
We also define~$T(g) = KgK$ for~$g \in \Delta$, and
for~$(a,b,c) \in D_\Delta$ we  abbreviate~$T(a,b,c) = T(d(a,b,c))$;
note that~$T(a,b,c) \in \mathcal{H}_c$. Define
$$
w=\begin{bsmallmatrix}&1&&\\\varpi&&&\\&&&\varpi\\&&1&\end{bsmallmatrix}.
$$
The element~$w$ is in~$\Delta$,~$\lambda(w)=\varpi$, and~$w$ normalizes~$K$.
We define
\begin{equation}
\label{Vdefeq}
V = T(w) = K w K.
\end{equation}
If~$C$ is an element of~$K \backslash \Delta / K$,
then there exist unique~$(a,b,c) \in S_\Delta$ and~$\delta \in \{0,1\}$
such that
\begin{equation}
\label{uniquerepeq}
C = K w^\delta d(a,b,c) K = V^\delta T(a,b,c).
\end{equation}
See Lemma~3.1 and~(27) of~\cite{JLPR}.

The work~\cite{JLPR} also describes~$\mathcal{H}$ in terms of generators and
relations. Define
\begin{equation}
\label{generatorsH}
\qquad X=T(0,0,1),\qquad Y_1=T(0,1,2),\qquad Y_2=T(1,0,2).
\end{equation}
Theorem~3.10 of~\cite{JLPR} proves that~$\mathcal{H}$
is the~$\Z$-algebra generated by~$V$,~$X$,~$Y_1$, and~$Y_2$
subject to seven relations (see~(72) through~(78) of~\cite{JLPR}).
We note that~$V$ and~$X$ are in~$\mathcal{H}_1$ and~$Y_1$
and~$Y_2$ are in~$\mathcal{H}_2$.
Two key results lie behind the description in~\cite{JLPR} of~$\mathcal{H}$ in
terms of generators and relations.
These two theorems calculate~$XT(a,b,c)$ and~$Y_1 T(a,b,c)$
for~$(a,b,c)$ in~$S_\Delta$.
See Theorem~\ref{3p5theorem} and Theorem~\ref{3p6theorem} below.

\begin{theorem}[Theorem~3.5 of~\cite{JLPR}]
\label{3p5theorem}
Let~$(a,b,c) \in S_\Delta$.
There exist unique  integers~$n_1$,~$n_2$,~$n_3$,~$n_4$, and~$n_5$ such that
\begin{align}
&XT(a,b,c)=
n_1 T(a,b,c+1)+n_2 T(a,b+1,c+1)\nonumber \\
&\qquad +n_3T(a+1,b,c+1)+n_4T(a+1,b+1,c+1)
+n_5 VT(a,b,c) \label{degree1multiplicationthmeq1000}
\end{align}
and
for~$1 \leq i \leq 5$,~$n_i  =0$ if~$(e,f,g) \notin S_\Delta$,
where~$(e,f,g)$ is the triple of integers in the~$i$-th summand in
\eqref{degree1multiplicationthmeq1000}.
The integers~$n_i$ are as in Table~\ref{phatable1}.

\begin{table}[h!]
\caption{The integers~$n_i$ from \eqref{degree1multiplicationthmeq1000}}
\label{phatable1}
\renewcommand{\arraystretch}{1}
\begin{tabular}{cllllll}
\toprule
\multicolumn{2}{c}{conditions} & $n_1$ & $n_2$ & $n_3$ & $n_4$ & $n_5$ \\
\midrule
$a<b$ & $b=c-b$ & $1$ & $0$ & $q^2$ & $0$ & $q^2-1$ \\
& $b+1=c-b$ & $1$ & $q+1$ & $q^2$ & $q^3+q^2$ & $q^2-1$ \\
& $b+2 \leq c-b$ & $1$ & $q$ & $q^2$ & $q^3$ & $q^2-1$ \\
\midrule
$a=b$ & $b = c-b$ & $1$ & $0$ & $0$ & $0$ & $0$ \\
 & $b+1 = c-b$ & $1$ & $q+1$ & $q+1$ & $q^3+2q^2+q$ & $q-1$ \\
 & $b+2 \leq c-b$ & $1$ & $q$ & $q$ & $q^3$ & $q-1$ \\
 \midrule
$a>b$ & $a=c-a$  & $1$ & $q^2$ & $0$ & $0$ & $q^2-1$ \\
& $a+1 = c-a$ & $1$ & $q^2$ & $q+1$ & $q^3+q^2$ & $q^2-1$ \\
& $a+2 \leq c-a$ & $1$ & $q^2$ & $q$ & $q^3$ & $q^2-1$ \\
\bottomrule
\end{tabular}
\end{table}
\end{theorem}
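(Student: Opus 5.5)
The plan is the standard Hecke-algebra computation of $XT(a,b,c)$ by single cosets, followed by matching coefficients on double cosets.

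\emph{Step 1: constrain the support.} Write $X=\bigsqcup_i K x_i$ and $T(a,b,c)=\bigsqcup_j K y_j$ as finite disjoint unions of right $K$-cosets. Then
\[
XT(a,b,c)=\sum_{C} m(C)\,C ,
\qquad
m(C)=\#\{(i,j)\colon x_i y_j\in Kg_C\},
\]
where $g_C$ is any fixed representative of $C$. Each such $C$ lies in $K\backslash\Delta/K$, has degree $c+1$, and is contained in $Kd(0,0,1)K\,d(a,b,c)K$. Using the elementary divisor type description of \eqref{uniquerepeq} (elementary divisors of products relative to the paramodular lattice chain), I will show that the only candidates $V^\delta T(e,f,g)$ are the five listed in \eqref{degree1multiplicationthmeq1000}. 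Uniqueness of the $n_i$ then follows at once from the uniqueness in \eqref{uniquerepeq}: distinct summands with $(e,f,g)\in S_\Delta$ are distinct double cosets, hence linearly independent in $\mathcal H$.

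\emph{Step 2: compute the coefficients.} For each candidate $C$ I will use
\[
m(C)=\#\{\, j \colon g_C\,y_j^{-1}\in KXK \,\}
\]
with $g_C$ the diagonal representative (or $w$ times it). This needs explicit right coset representatives for $T(a,b,c)$, which I will obtain from an Iwahori/Bruhat-type decomposition of $K\,d(a,b,c)\,K$ into upper unipotent pieces over the Weyl-type elements normalizing the level-$\p$ structure. Counting the representatives for which $g_C y_j^{-1}$ has the shape of $d(0,0,1)$ modulo $K$ yields powers of $q$ and sums such as $q+1$ or $q^3+2q^2+q$. The case divisions of Table~\ref{phatable1} ($a<b$, $a=b$, $a>b$, and the gap $c-2\max(a,b)\in\{0,1,\ge2\}$) arise exactly where the stabilizers change and where some neighbors fall outside $S_\Delta$.

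\emph{Step 3: cross-checks.} I will use the degree map $\deg(C)=\#(K\backslash C)$, which satisfies $\deg(XT)=\deg X\cdot\deg T$, to verify each row of the table. I will also use the symmetry interchanging the roles of $a$ and $b$, realized by conjugation by $w$ (which normalizes $K$), to deduce the rows $a>b$ from the rows $a<b$. Finally, the degenerate case $a=b=c-b$, i.e.\ $T(a,a,2a)$, is central-like, so there $XT=T(a,a,2a+1)$, matching the row $n_1=1$ with all other $n_i=0$.

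\emph{Main obstacle.} The main obstacle is Step 2: producing clean right coset representatives for $T(a,b,c)$ in the paramodular (non-hyperspecial) setting, and deciding membership in $KXK$ efficiently. The hardest piece is the $V$-coefficient $n_5$, since $V$-type double cosets are not diagonal and have to be detected through the normalizing element $w$. My first attempt would be to reduce this to an $\SSp(4)$ computation: use $\lambda$ and the $\Z$-grading to fix the degree, then count via the reductions modulo $\p$ of the relevant blocks, turning the coefficients into counts of points of Grassmannian-type varieties over $\OF/\p$. As a final consistency check, the $n_5$ values can be compared against the degree identity from Step 3.
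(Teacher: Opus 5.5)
This paper gives no proof of the statement; it is quoted from \cite{JLPR}. So your proposal has to carry the entire computation, and it does not.

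What you have is sound as far as it goes:
the multiplicity formula $m(C)=\#\{j : g_C y_j^{-1}\in KXK\}$ is correct;
uniqueness does follow from \eqref{uniquerepeq};
applying $\alpha$ (which fixes $X$ and $V$ and sends $T(a,b,c)$ to $T(b,a,c)$) does turn the rows $a<b$ into the rows $a>b$ with $n_2$ and $n_3$ interchanged;
and for $a=b=c-b$ the double coset $T(a,a,2a)=\varpi^aK$ is a single central coset, so $XT(a,a,2a)=T(a,a,2a+1)$.

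Every other entry of Table~\ref{phatable1}, however, is left to Step~2, which is only an intention. You never write down right coset representatives of $Kd(a,b,c)K$, i.e.\ a parametrization of $(K\cap d^{-1}Kd)\backslash K$ for $d=d(a,b,c)$. That parametrization is exactly where the cases $a<b$ versus $a=b$ and $c-2b=0$, $1$, or $\geq 2$ enter, for instance through the reduction of $K$ to $\{(A,B)\in\GL(2,\OF/\p)^2:\det A=\det B\}$ and its pro-unipotent part. You also never perform a single count: ``yields powers of $q$ and sums such as $q+1$'' describes the answer rather than derives it.

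The membership test you list as an obstacle is actually trivial. The only double cosets of degree $1$ are $KXK$ and $KwK=wK$. Hence $g_Cy_j^{-1}\in KXK$ if and only if $g_Cy_j^{-1}\in\Delta$ and $w^{-1}g_Cy_j^{-1}\notin K$. The whole difficulty is the coset parametrization, and that is what is missing.

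Step~1 is also not viable as described, because elementary divisors relative to one lattice do not separate the relevant double cosets. Take $L=\OF\oplus\OF\oplus\p\oplus\OF$, whose stabilizer in $G$ is $K$, and suppose $(a,a,c)\in S_\Delta$ with $a<c-a$. Then $T(a+1,a,c+1)$, $T(a,a+1,c+1)$ and $VT(a,a,c)$ all have elementary divisors $\{a,a+1,c-a,c-a+1\}$ with respect to $L$. Yet they occur in $XT(a,a,c)$ with the different coefficients $q,q,q-1$ (resp.\ $q+1,q+1,q-1$). A support argument of your type would need three things, none of which you supply:
a finer invariant (for instance mixed invariants with respect to $L$ and its dual);
a proof that it classifies $K\backslash\Delta/K$;
and a rule for how it changes under left multiplication by $KXK$.

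The economical repair is to promote Step~3 from a cross-check to a proof step. All multiplicities are non-negative and $\deg$ is a ring homomorphism. So once the five coefficients are known, the equality $\deg X\cdot\deg T(a,b,c)=\sum_i n_i\deg(\text{$i$-th summand})$, with $\deg X=q(q+1)^2$ and $\deg V=1$, forces every other double coset to have multiplicity $0$. That in turn requires the indices $\deg T(e,f,g)=[K:K\cap d(e,f,g)^{-1}Kd(e,f,g)]$ in all cases, which is one more computation you would have to carry out.
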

\begin{theorem}[Theorem~3.6 of~\cite{JLPR}]
\label{3p6theorem}
Let~$(a,b,c) \in S_\Delta$.
There exist unique  integers~$m_i$,~$1 \leq i \leq 6$, such that
\begin{align}
&Y_1 T(a,b,c) = m_1 T(a,b+1,c+2)+m_2 T(a+1,b+1,c+2)\nonumber \\
&\qquad+ m_3 T(a+2,b+1,c+2) +m_4 V T(a,b,c+1)\nonumber \\
&\qquad+ m_5 V T(a+1,b,c+1) +m_6 V T(a+1,b+1,c+1)\label{degree2multiplicationthmeq1}
\end{align}
and for~$1 \leq i \leq 6$,~$m_i=0$ if~$(e,f,g) \notin S_\Delta$,
where~$(e,f,g)$ is the triple of integers occurring in the~$i$-th summand of
\eqref{degree2multiplicationthmeq1}.
The integers~$m_i$  are  as in Table~\ref{phatable2}.

\begin{table}[h!]
\caption{The integers~$m_i$ from~\eqref{degree2multiplicationthmeq1}}
\label{phatable2}
\renewcommand{\arraystretch}{1}
\begin{tabular}{lcrrrrrr}
\toprule
\multicolumn{2}{c}{conditions} & $m_1$ & $m_2$ & $m_3$ & $m_4$ & $m_5$ & $m_6$\\
\midrule
$a<b$ & $b=c-b$ $\mathrm{and}$ $a+2 = c-a$   & $1$ & $q^3-q^2$ & $q^4+q^3$ & $0$ & $q^3-q^2$ & $0$\\
& $b=c-b$ $\mathrm{and}$ $a+2 < c-a$ & $1$ & $q^3-q^2$ & $q^4$ & $0$ & $q^3-q^2$  & $0$ \\
& $b+1=c-b$ & $1$ & $q^3-q^2$ & $q^4$ & $0$ & $q^3-q^2$ & $q^4-q^2$ \\
& $b+2 \leq c-b$ & $1$ & $q^3-q^2$ & $q^4$ & $0$ & $q^3-q^2$ & $q^4-q^3$ \\
\midrule
$a=b$ & $a=c-a$ & $1$ & $0$ & $0$ & $0$ & $0$ & $0$ \\
& $a+1=c-a$ & $1$ & $q^2$ & $0$ & $0$ & $q^2-1$ & $0$\\
& $a+2=c-a$ & $1$ & $q^2-q$ & $q^3+q^2$ & $0$ & $q^2-q$ & $0$ \\
& $a+3 \leq c-a$ & $1$ & $q^2-q$ & $q^3$ & $0$ & $q^2-q$ & $0$\\
\midrule
$a>b$ & $a=c-a$ & $q$ & $0$ & $0$ & $q-1$ & $0$ & $0$ \\
& $a+1=c-a$ & $q$ & $q^2$ & $0$ & $q-1$ & $q^2-1$ & $0$ \\
& $a+2 =c-a$ & $q$ & $q^2-q$ & $q^3+q^2$ & $q-1$ & $q^2-q$ & $0$ \\
& $a+3 \leq c-a$ & $q$ & $q^2-q$ & $q^3$ & $q-1$ & $q^2-q$ & $0$ \\
\bottomrule
\end{tabular}
\end{table}
\end{theorem}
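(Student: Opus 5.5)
This theorem is quoted from~\cite{JLPR}, so the plan follows the standard method for computing products of double cosets in $\mathcal{R}(K,\Delta)$. The approach is to decompose both $Y_1=T(0,1,2)$ and $T(a,b,c)$ into explicit disjoint unions of right cosets, then compute the product through the usual structure constants. That is, for $C_1=\sqcup_i Kg_i$ and $C_2=\sqcup_j Kh_j$ and a target double coset $KzK$, the coefficient of $KzK$ in $C_1C_2$ is
\[
\#\{(i,j)\colon Kg_ih_j = Kz\}.
\]

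\textbf{Step 1: support of the product.} Every product $g_ih_j$ has $v(\lambda)=c+2$. Using elementary divisors together with the unique representation \eqref{uniquerepeq}, I would show that $KwK\cdot$ or $K d\,K$ only occurs for the six listed triples. Concretely, the Cartan-type invariants of $d(0,1,2)\,k\,d(a,b,c)$ for $k\in K$ can differ from $(a,b,c)$ only by the shifts appearing in \eqref{degree2multiplicationthmeq1}, possibly multiplied by $w$ (hence the $VT$ terms in degree $c+1$ triples). Uniqueness of the $m_i$ then follows from the linear independence of distinct double cosets. The convention $m_i=0$ for triples outside $S_\Delta$ is forced by the uniqueness of $(\delta,a,b,c)$ in \eqref{uniquerepeq}.

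\textbf{Step 2: compute the coefficients.} I would determine each $m_i$ by fixing a convenient representative $z$ (the diagonal $d(e,f,g)$, or $w\,d(e,f,g)$) and counting the cosets $Kg_i\subset Y_1$ such that $z\in g_i\,KT(a,b,c)$. Equivalently, $m_i$ counts the $g_i$ with $z^{-1}g_i^{-1}$ lying in a specified compact set. This requires an explicit right coset decomposition of $Y_1$. I would obtain it via the paramodular Iwahori/Bruhat decomposition: $K\cap$ Klingen parahoric cosets, indexed by $\OF/\p$-parameters, giving $\deg Y_1$ cosets as a polynomial in $q$. Testing membership in $KT(a,b,c)K$ then reduces to checking valuations of entries and minors.

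\textbf{Step 3: case analysis and checks.} The counts depend on which inequalities among $a$, $b$, $c-a$, $c-b$ are tight, because tightness determines which coset parameters collapse. This is exactly what produces the twelve rows of Table~\ref{phatable2}. As a consistency check I would compare degrees: for each row, $\deg(Y_1)\deg T(a,b,c)$ must equal $\sum m_i\deg(\text{target})$.

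I expect the main obstacle to be the boundary rows, for instance $a<b$ with $b=c-b$, $a+2=c-a$, where the coefficient $q^4+q^3$ appears. In these cases two target double cosets coincide or the stabilizers enlarge, and the naive count has to be corrected. For these I would first try an alternative route: deduce the products from Theorem~\ref{3p5theorem} and an expression of $Y_1$ via $X^2$, $V$ and $Y_2$ in low degree, then use associativity. This amounts to computing $X(XT(a,b,c))$, which involves $Y_1T$, $Y_2T$ and $VT$ terms, in order to pin down the remaining constants.
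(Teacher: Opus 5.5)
The paper gives no proof of this statement; it is quoted from \cite{JLPR}, so your plan has to stand on its own. Shimura-style coset counting with a degree check is the right framework. As written, however, two of your steps would fail and the key ingredient is missing.

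First, the count in Step~2 is not well defined. For right cosets $Y_1=\bigsqcup_i Kg_i$, the condition $z\in g_iKT(a,b,c)$ changes when $g_i$ is replaced by $kg_i$. There are two correct versions:
\begin{itemize}
\item $m=\#\{j: zh_j^{-1}\in Y_1\}$, where $T(a,b,c)=\bigsqcup_j Kh_j$;
\item $m=\#\{i: g_i^{-1}z\in T(a,b,c)\}$, where $Y_1=\bigsqcup_i g_iK$ is a \emph{left} decomposition. You can obtain one from a right decomposition via $g\mapsto\lambda(g)g^{-1}$, since $\varpi^2d(0,1,2)^{-1}=d(2,1,2)\in Y_1$.
\end{itemize}

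Second, and more seriously, the support claim of Step~1 and the membership test of Step~2 both require a complete invariant for the double cosets $Kw^\delta d(e,f,g)K$. Ordinary elementary divisors are not one. The elements $w$ and $d(0,0,1)$ have the same similitude and the same elementary divisors $(1,1,\varpi,\varpi)$ over $\OF^4$, yet $KwK=V\neq X$. You need invariants relative to lattices fixed by $K$, such as $L_1=\OF e_1+\p e_2+\p e_3+\p e_4$ and $L_2=\OF e_1+\OF e_2+\p e_3+\OF e_4$. Here $wL_1=\varpi L_2$ but $d(0,0,1)L_1\neq\varpi L_2$. Turning such invariants into a usable criterion on valuations of entries and minors is the real work, not a routine check.

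Your fallback for the boundary rows also cannot work. Table~\ref{phatable1} at $(0,0,1)$ gives
\[
X^2=T(0,0,2)+(q+1)(Y_1+Y_2)+(q-1)VX+q(q+1)^2V^2 .
\]
So computing $X(XT(a,b,c))$ only determines $T(0,0,2)T(a,b,c)+(q+1)(Y_1+Y_2)T(a,b,c)$ modulo known terms. That is a single equation, it introduces the new unknown $T(0,0,2)T(a,b,c)$, and it sees $Y_1$ only through $Y_1+Y_2$. More fundamentally, $X$ and $V$ are fixed by $\alpha$ while $Y_1$ is not. Hence $Y_1$ does not lie in the subalgebra generated by $X$ and $V$, and left multiplication by $Y_1$ cannot be assembled from the left multiplications by $X$ and $V$ that Theorem~\ref{3p5theorem} supplies.

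The row you single out is in fact the easiest case for the direct count. It consists exactly of the triples $(a,a+1,2a+2)$, and $T(a,a+1,2a+2)=V^{2a}Y_1$, so that row computes $V^{2a}Y_1^2$. Since $\varpi^2h^{-1}\in Y_1$ for every $h\in Y_1$, the coefficient $q^4+q^3$ of $T(a+2,a+2,2a+4)=V^{2a+4}$ is just the number of cosets in $Y_1$. No correction of a naive count is involved, and distinct triples in $S_\Delta$ never give the same double coset, by \eqref{uniquerepeq}. The boundary rows should therefore be handled by the same, correctly formulated, direct count.
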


In this paper we study certain elements of~$\mathcal{H}$.
For~$k \in \Z_{\geq 0}$ we define
\begin{equation}
\label{Tqkdefeq}
T(q^k) =  \sum_{\substack{C \in K \backslash \Delta / K, \\ C \in \mathcal{H}_k}} C.
\end{equation}
By the unique decomposition~\eqref{uniquerepeq} mentioned
above, we may write~$T(q^k)$ for~$k \in \Z_{\geq 0}$ as
\begin{equation}
\label{Tqkformalt1eq}
T(q^k)
= \sum_{(a,b,k) \in S_\Delta} T(a, b, k)
+ V \sum_{(a,b,k-1) \in S_\Delta} T(a,b,k-1).
\end{equation}
In this formula, the second summand is absent if~$k=0$.
For example,~$T(q^0)=1$ and~$T(q^1)= X+V$.

Finally,~\cite{JLPR} also describes an important automorphism
and anti-automorphism of~$\mathcal{H}$.
If ~$\gamma \colon G \to G$ is an automorphism (anti-automorphism)
such that~$\gamma(\Delta)=\Delta$ and~$\gamma(K)=K$,
then the map~$\mathcal{H} \to \mathcal{H}$ determined
by~$KgK \mapsto K \gamma (g) K$ for~$g \in \Delta$ is a
$\Z$-algebra automorphism (anti-automorphism).
We let~$\alpha \colon \mathcal{H} \to \mathcal{H}$
and~$\beta \colon \mathcal{H} \to \mathcal{H}$ denote the automorphism
and anti-automorphism determined by conjugation by~$w$ and the
anti-automorphism of~$G$ that
sends~$\begin{bsmallmatrix} a&b \\ c&d \end{bsmallmatrix}$
to~$\begin{bsmallmatrix} \transp{d} & -\transp{b} \\
-\transp{c} & \transp{a} \end{bsmallmatrix}$, respectively.
Then~$\alpha^2=\beta^2=\mathrm{Id}$,
\begin{equation}
\label{alphabetapropeq}
\alpha \left( T(a,b,c)\right)=T(b,a,c), \qquad \beta \left( T(a,b,c)\right)
=T(c-a,c-b,c)
\end{equation}
for $(a,b,c) \in D_\Delta$, and
\begin{gather}
\alpha( V )=V, \qquad \alpha (X)
=X, \qquad \alpha (Y_1)=Y_2, \qquad \alpha(Y_2)=Y_1,\label{alphageneq}\\
\beta(V) = V, \qquad \beta(X) = X, \qquad \beta(Y_1) = Y_1, \qquad
\beta(Y_2)=Y_2. \label{betageneq}
\end{gather}
We will study the fixed points of~$\alpha$ in the next section.

\section{An intermediate algebra}
\label{sec3}
\label{intermediatealgsec}

One of the goals of this paper is to prove the rationality
result Theorem~\ref{generalrationalitytheorem} concerning the Hecke operators~$T(q^k)$
for~$k \in \Z_{\geq 0}$ (as defined in~\eqref{Tqkdefeq}). The proof of this theorem
is based on the following insight. Let~$k \in \Z_{\geq 0}$.
We may rewrite~\eqref{Tqkdefeq} as
\begin{align}
T(q^k)
&= \sum_{\substack{(a,b,k) \in S_\Delta, \\ a <b} } \left( T(a, b, k) + T(b,a,k) \right)
+ \sum_{\substack{(a,b,k) \in S_\Delta, \\ a =b} }  T(a, b, k) \nonumber \\
&\quad + V \left( \sum_{\substack{(a,b,k-1) \in S_\Delta, \\ a <b} } \left( T(a, b, k-1) + T(b,a,k-1) \right)
+ \sum_{\substack{(a,b,k-1) \in S_\Delta, \\ a =b} }  T(a, b, k-1) \right). \nonumber
\end{align}
This becomes more compact with some new notation.
If~$(a,b,c) \in S_\Delta$,
then we  define
\begin{equation}
 T^\alpha (a,b,c) =
\begin{cases}
T(a,b,c)=T(a,a,c) &\text{if $a=b$},\\
T(a,b,c)+T(b,a,c)&\text{if $a \neq b$}.
\end{cases}
\end{equation}
We also define
\begin{equation}
\label{sdeltaplusdefeq}
S_\Delta^+ = \{ (a,b,c) \in \Z^3 \colon 0 \leq a \leq b \leq c-b \leq c-a \}
= \{ (a,b,c) \in \Z^3 \colon 0 \leq a \leq b \leq \lfloor c/2 \rfloor \}.
\end{equation}
With this new notation we now have
\begin{equation}
\label{Tqkalphaeq}
T(q^k)
= \sum_{(a,b,k) \in S_\Delta^+ } T^\alpha (a, b, k)
 + V  \sum_{(a,b,k-1) \in S_\Delta^+ } T^\alpha (a, b, k-1).
\end{equation}
Moreover, if~$(a,b,c) \in S_\Delta^+$, then by~\eqref{alphabetapropeq}
the element~$T^\alpha(a,b,c)$ is contained in
\begin{equation}\label{Halphadef}
\mathcal{H}^{\alpha}=\{A\in\mathcal{H}:\alpha (A) =A\}.
\end{equation}
Here,~$\alpha$ is the involutive automorphism from Section~\ref{notationandpreviousresultssec}.
Evidently, the set~$\mathcal{H}^\alpha$ is a~$\Z$-subalgebra of~$\mathcal{H}$.
Since~$\alpha (V)=V$, the element~$V$ is also in~$\mathcal{H}^\alpha$.
Thus, by~\eqref{Tqkalphaeq}, ~$T(q^k)$ is contained in~$\mathcal{H}^\alpha$.
In this section we will prove
that~$\mathcal{H}^\alpha$ is generated by the elements~$V$,~$X$, and
\begin{equation}
\label{Ydefeq}
Y = Y_1 + Y_2,
\end{equation}
that~$\mathcal{H}^\alpha$ is commutative,
and that~$V$,~$X$, and~$Y$ are algebraically independent over~$\Z$.
To end the section we will give formulas for~$X T^\alpha(a,b,c)$
and~$Y T^\alpha(a,b,c)$ for~$(a,b,c) \in S_\Delta^+$.
These formulas will be used to prove Theorem~\ref{generalrationalitytheorem}.

\begin{lemma}
\label{Halphacharlemma}
Let~$A \in \mathcal{H}$.
Then~$A \in \mathcal{H}^\alpha$ if and only if~$A$ has the form
\begin{equation}
\label{Halphacharlemmaeq1}
A = \sum_{k=0}^\infty\sum_{\substack{e_1,e_2,j \in \Z_{\geq 0},\\ e_1 +e_2 +2j=k}}
c(e_1,e_2,j)V^{e_1} X^{e_2} (Y_1^j + Y_2^j)
\end{equation}
where the~$c(e_1,e_2,j)$ are in~$\Z$ and are zero for all but finitely
many~$(e_1,e_2,j)$.
\end{lemma}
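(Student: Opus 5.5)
The easy direction is immediate from \eqref{alphageneq}. Since $\alpha$ is a ring automorphism with $\alpha(V)=V$, $\alpha(X)=X$ and $\alpha(Y_1)=Y_2$, $\alpha(Y_2)=Y_1$, we get $\alpha\bigl(V^{e_1}X^{e_2}(Y_1^j+Y_2^j)\bigr)=V^{e_1}X^{e_2}(Y_2^j+Y_1^j)$. So every $A$ of the form \eqref{Halphacharlemmaeq1} lies in $\mathcal{H}^\alpha$.

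For the converse, I will produce a $\Z$-basis $\mathcal{B}$ of $\mathcal{H}$ on which $\alpha$ acts by a permutation. I take $\mathcal{B}$ to consist of the monomials
\[
V^{e_1}X^{e_2}Y_1^{j},\qquad V^{e_1}X^{e_2}Y_2^{j}\qquad (e_1,e_2,j\in\Z_{\geq 0}),
\]
where the two families coincide when $j=0$. By \eqref{alphageneq}, $\alpha$ fixes the monomials with $j=0$ and swaps $V^{e_1}X^{e_2}Y_1^j$ with $V^{e_1}X^{e_2}Y_2^j$ when $j\geq 1$. Given this, the fixed submodule of a free $\Z$-module under a permutation of a basis is spanned by the orbit sums. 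Writing $A=\sum_{b\in\mathcal B} c_b\, b$, the condition $\alpha(A)=A$ forces $c_{b}=c_{\alpha(b)}$. Regrouping, and noting that for $j=0$ we have $Y_1^0+Y_2^0=2$, this gives \eqref{Halphacharlemmaeq1}.

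There is one subtlety here. For $j=0$ the orbit sum is $V^{e_1}X^{e_2}$, not $2V^{e_1}X^{e_2}$, so I must check whether the lemma intends the $j=0$ term as written. With integer coefficients, the expression $c(e_1,e_2,0)\cdot 2V^{e_1}X^{e_2}$ only gives even multiples. I will therefore read $Y_1^0+Y_2^0$ in the $j=0$ summand as the single monomial $V^{e_1}X^{e_2}$ (the convention implicit in the statement), and say so explicitly in the proof.

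The main step, and the expected obstacle, is proving that $\mathcal{B}$ is a $\Z$-basis. I plan a triangularity argument against the double coset basis $V^\delta T(a,b,c)$ from \eqref{uniquerepeq}. First I note that $w^2=\varpi\cdot 1$, so $V^2=T(1,1,2)$ is central-like, and $V^{e_1}=V^{\delta}T(m,m,2m)$ with $e_1=2m+\delta$. I then define a map from $\mathcal B$ to pairs $(\delta,(a,b,c))$ with $(a,b,c)\in S_\Delta$:
\[
V^{2m+\delta}X^{e_2}Y_1^{j}\ \mapsto\ \bigl(\delta,(m,m+j,2m+2j+e_2)\bigr),
\]
and symmetrically for $Y_2^j$, swapping $a$ and $b$. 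This is a bijection. The inverse on $a\le b$ is $m=a$, $j=b-a$, $e_2=c-2b\geq 0$; the case $a\geq b$ is analogous.

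Next I show, by induction on $e_2+j$ using Theorem~\ref{3p5theorem} and Theorem~\ref{3p6theorem}, that $X^{e_2}Y_1^j$ equals $T(0,j,e_2+2j)$ plus terms that are smaller in a suitable order. The coefficients $n_1=1$ and $m_1=1$ apply here because along the induction the triples satisfy $a\le b$. The natural order is lexicographic, within fixed degree $c$, first in $\delta$ and then in $(\min(a,b), |a-b|)$, arranged so that the other summands in the two tables are strictly lower. Multiplying by $V^{e_1}$, which permutes double cosets, and applying $\alpha$ for the $Y_2$-family gives a unitriangular change of basis in each graded piece $\mathcal{H}_k$. The delicate point is choosing the order so that every non-leading term in Tables~\ref{phatable1} and~\ref{phatable2} is genuinely lower. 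If this is awkward, I would instead cite the basis statement underlying Theorem~3.10 of~\cite{JLPR}.
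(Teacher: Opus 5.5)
Taken with your fallback of citing Theorem~3.10 of~\cite{JLPR}, this is the paper's proof. That theorem gives a unique expansion in the monomials $V^{e_1}X^{e_2}Y_1^{e_3}Y_2^{e_4}$ with $e_3e_4=0$ (your $\mathcal{B}$). The automorphism $\alpha$ swaps $e_3$ and $e_4$, and uniqueness gives $n(e_1,e_2,e_3,e_4)=n(e_1,e_2,e_4,e_3)$.

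Your $j=0$ caveat is right, and it catches a real slip in the paper. The paper's last step rewrites $n(e_1,e_2,0,0)V^{e_1}X^{e_2}$ as $n(e_1,e_2,0,0)V^{e_1}X^{e_2}(Y_1^0+Y_2^0)$, which doubles it. Read literally with $c(e_1,e_2,j)\in\Z$, the lemma would exclude $V$ itself. Your convention is the correct statement; equivalently, sum over $j\geq 1$ and add a separate term $\sum c(e_1,e_2)V^{e_1}X^{e_2}$. The later use in Proposition~\ref{comalglemma} is unaffected.

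The self-contained triangularity route, however, fails with the order you name. In fact no order that compares $\delta$ first can work:
\begin{itemize}
\item By Table~\ref{phatable1}, $X^2=T(0,0,2)+(q+1)\bigl(T(0,1,2)+T(1,0,2)\bigr)+(q^3+2q^2+q)T(1,1,2)+(q-1)VT(0,0,1)$. So a $\delta=1$ coset must sit below the $\delta=0$ leading term.
\item But then $VX^2$ contains $(q-1)V^2T(0,0,1)=(q-1)T(1,1,3)$. This is a $\delta=0$ coset that must sit below the $\delta=1$ leading term $VT(0,0,2)$.
\end{itemize}
The culprit is that left multiplication by $V$ flips $\delta$.

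What works is the following order on the cosets $V^\delta T(a,b,c)$ of a fixed degree, with larger values counting as lower:
\begin{itemize}
\item first by $\ell=2\min(a,b)+\delta$, which is $e_1$ under your bijection; left multiplication by $V$ raises $\ell$ by exactly one and preserves $|a-b|$ and the sign of $a-b$;
\item then by $|a-b|$, which is $j$.
\end{itemize}
Tables~\ref{phatable1} and~\ref{phatable2}, together with $TV=V\alpha(T)$, show two things. First, left multiplication by $X$ or $Y_1$ never decreases $(\ell,|a-b|)$ lexicographically. Second, for $j\geq 1$ the $\ell=0$ part of $X^{e_2}Y_1^j$ is $\sum_{i\geq 0}d_i\,T(0,j+i,e_2+2j)$ with $d_0=1$.

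Ties in $(\ell,|a-b|)$ occur only between $V^\delta T(m,m+j,c)$ and $V^\delta T(m+j,m,c)$. The latter never appears in the expansion of $V^{e_1}X^{e_2}Y_1^j$. Hence the change of basis is unitriangular in each $\mathcal{H}_n$.
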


\begin{proof}
Let~$A\in\mathcal{H}^\alpha$.
By Theorem~3.10 of~\cite{JLPR}
\begin{equation}
\label{Halphacharlemmaeq2}
A= \sum_{k=0}^\infty
\sum_{\substack{e_1,e_2,e_3,e_4 \in \Z_{\geq 0},\\ e_1+e_2+2e_3+2e_4 =k,\\
e_3e_4=0}}
n(e_1,e_2,e_3,e_4) V^{e_1} X^{e_2} Y_1^{e_3} Y_2^{e_4}
\end{equation}
where the integers~$n(e_1,e_2,e_3,e_4)$ are unique and
are zero for all but finitely many~$(e_1,e_2,e_3,e_4)$.
Since~$A$ is fixed by~$\alpha$,~\eqref{alphageneq} implies that
\begin{equation}
\label{Halphacharlemmaeq3}
A =
\sum_{k=0}^\infty
\sum_{\substack{e_1,e_2,e_3,e_4 \in \Z_{\geq 0},\\ e_1+e_2+2e_3+2e_4 =k,\\
e_3e_4=0}}
n(e_1,e_2,e_3,e_4) V^{e_1} X^{e_2} Y_2^{e_3} Y_1^{e_4}.
\end{equation}
By the uniqueness of expression in~\eqref{Halphacharlemmaeq2},
we have~$n(e_1,e_2,e_3,e_4)=n(e_1,e_2,e_4,e_3)$ for all
indices~$(e_1,e_2,e_3,e_4)$.
Therefore,
\begin{equation}
\label{Halphacharlemmaeq4}
A =
\sum_{k=0}^\infty
\sum_{\substack{e_1,e_2,j \in \Z_{\geq 0},\\ e_1+e_2+2j =k}}
n(e_1,e_2,j,0) V^{e_1} X^{e_2} (Y_1^j + Y_2^j).
\end{equation}
This proves the existence of an expression for~$A$ as in~\eqref{Halphacharlemmaeq1}.
If~$A$ has an expression as in~\eqref{Halphacharlemmaeq1}, then~\eqref{alphageneq}
implies that~$A$ is fixed by~$\alpha$.
\end{proof}

\begin{proposition}
\label{comalglemma}
The subalgebra~$\mathcal{H}^\alpha$ is commutative
and is generated by~$V, X,$ and~$Y$.
\end{proposition}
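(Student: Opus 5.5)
The plan is to combine the spanning set of Lemma~\ref{Halphacharlemma} with three facts: $V$ commutes with everything in $\mathcal{H}^\alpha$, $X$ commutes with $Y$, and $Y_1Y_2$ is expressible in $V$ and $X$. Write $p_j=Y_1^j+Y_2^j$.

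\emph{Step 1 ($V$ is central in $\mathcal{H}^\alpha$).} Since $w$ normalizes $K$, we have $KwK\cdot KgK=KwgK=K(wgw^{-1})wK$ for $g\in\Delta$. Hence $VA=\alpha(A)V$ for every $A\in\mathcal{H}$. In particular $V$ commutes with every element of $\mathcal{H}^\alpha$, and $VY_1=Y_2V$.

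\emph{Step 2 ($X$ commutes with $Y$).} I would compute $XY_1=XT(0,1,2)$ from Table~\ref{phatable1}, using the row $a<b$, $b+1=c-b$. I would compute $Y_1X=Y_1T(0,0,1)$ from Table~\ref{phatable2}, using the row $a=b$, $a+1=c-a$. Applying $\alpha$ to each product gives $XY_2$ and $Y_2X$, via $\alpha(T(a,b,c))=T(b,a,c)$ and $\alpha(V)=V$. Summing then shows $XY=YX$: the individual products $XY_i$ and $Y_iX$ need not agree, but their $\alpha$-symmetrizations should. This is a finite comparison of coefficients. Alternatively, one may read it off from the relations (72)--(78) of~\cite{JLPR} behind Theorem~3.10.

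\emph{Step 3 ($Y_1Y_2$ lies in $\Z[V,X]$).} I need $Y_1Y_2=Y_2Y_1=P$ for some polynomial $P$ in $V$ and $X$. The normal form of Theorem~3.10 of~\cite{JLPR}, with the condition $e_3e_4=0$, shows that the relations let us rewrite $Y_1Y_2$ and $Y_2Y_1$ without mixed $Y$-monomials. I would extract from those relations, or compute with Theorem~\ref{3p6theorem} applied to $Y_1T(1,0,2)$ after rewriting $T(1,0,2)$, that both equal the same element $P$ of degree~$4$ in $\Z[V,X]$. Such a $P$ is fixed by $\alpha$.

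\emph{Step 4 (generation and commutativity).} Granting Step~3, $P$ commutes with $Y_1$ and $Y_2$, and for $j\ge 2$ we obtain the Newton recursion
\[
p_j = Y p_{j-1} - P\, p_{j-2}, \qquad p_0=2,\quad p_1=Y.
\]
By induction each $p_j$ lies in the subalgebra generated by $V$, $X$, $Y$. Lemma~\ref{Halphacharlemma} then shows that $\mathcal{H}^\alpha$ is generated by $V$, $X$, $Y$. Commutativity follows because the three generators pairwise commute: $V$ with $X$ and $Y$ by Step~1, and $X$ with $Y$ by Step~2.

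I expect the main obstacle to be Steps~2 and~3. These are the only places where genuine Hecke-algebra multiplication enters, and they require careful bookkeeping with Tables~\ref{phatable1} and~\ref{phatable2}, or with the relations of~\cite{JLPR}. A particular difficulty is expressing the products $Y_iX$ and $Y_1Y_2$, where the Hecke operator is on the left of a coset whose representative is not in $S_\Delta$, in the normal form~\eqref{uniquerepeq}.
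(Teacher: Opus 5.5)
Your Steps~1 and~2 are sound. Step~1 is a clean proof that $VA=\alpha(A)V$, and Step~2 recovers relations (75)--(76) of~\cite{JLPR}. (For $XT(0,1,2)$ the applicable row of Table~\ref{phatable1} is $b=c-b$ rather than $b+1=c-b$, though the surviving terms coincide.)

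The gap is Step~3, which is false, and the computation you propose yourself shows it. Since $(1,0,2)\in S_\Delta$, no rewriting is needed: the row $a>b$, $a=c-a$ of Table~\ref{phatable2} applies directly to $Y_1T(1,0,2)$ and gives
\[
Y_1Y_2=q\,T(1,1,4)+(q-1)\,V\,T(1,0,3).
\]
Applying $\alpha$ yields $Y_2Y_1=q\,T(1,1,4)+(q-1)\,V\,T(0,1,3)$. By~\eqref{uniquerepeq}, $VT(1,0,3)$ and $VT(0,1,3)$ are distinct basis elements. Hence $Y_1Y_2\neq Y_2Y_1$ and $Y_1Y_2\notin\mathcal{H}^\alpha$, so in particular $Y_1Y_2\notin\Z[V,X]\subset\mathcal{H}^\alpha$. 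Equivalently, relation (77) of~\cite{JLPR}, used in the proof of Lemma~\ref{qasupportlemma}, gives $Y_1Y_2-Y_2Y_1=(q-1)\bigl((q^2-1)V^2-VX\bigr)(Y_1-Y_2)\neq 0$.

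Your Newton recursion $p_j=Yp_{j-1}-Pp_{j-2}$ depends on $Y_1Y_2=Y_2Y_1=P$ with $P$ in the subalgebra generated by $V,X,Y$. So Step~4 collapses, and with it both the generation claim and the commutativity of all of $\mathcal{H}^\alpha$, which you deduce from generation.

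What does hold is the weaker statement that $Y_1Y_2$ and $Y_2Y_1=\alpha(Y_1Y_2)$ lie in $V\mathcal{H}$; this is visible in the display above, using $T(1,1,4)=V^2T(0,0,2)$. By your Step~1, $V\mathcal{H}=\mathcal{H}V$ is a two-sided ideal. So $Y^j-p_j$, being a sum of words containing $Y_1Y_2$ or $Y_2Y_1$, equals $VB_0$ with $B_0\in\mathcal{H}_{2j-1}$. Since $\alpha$ fixes both $Y^j-p_j$ and $V$, you get $V\bigl(\alpha(B_0)-B_0\bigr)=0$. Left multiplication by $V$ is injective: it sends the basis element $KgK$ to $KwgK$, injectively on double cosets, and the paper cites Lemma~5.1 of~\cite{JLPR} for this. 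Hence $B_0\in\mathcal{H}^\alpha_{2j-1}$. Induction on degree, via Lemma~\ref{Halphacharlemma} and $\mathcal{H}^\alpha=\bigoplus_k\mathcal{H}^\alpha_k$, then places every $p_j$, and hence all of $\mathcal{H}^\alpha$, in the subalgebra generated by $V$, $X$, $Y$. Your Steps~1 and~2 then give commutativity. This replaces your Steps~3 and~4 and is exactly the paper's argument.
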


\begin{proof}
The elements~$V$,~$X$, and~$Y=Y_1+Y_2$ commute by~(7.2),~(7.3), and~(7.4) of Lemma~3.8
of~\cite{JLPR}.
Therefore, to prove the proposition, it will suffice
to prove that~$\mathcal{H}^\alpha$ is generated by~$V$,~$X$, and~$Y$.
Let~$\mathcal{H}'$ be the subalgebra of~$\mathcal{H}$ generated by~$X, V$ and~$Y$.
We need to prove that~$\mathcal{H}^\alpha$ is contained in~$\mathcal{H}'$.
For~$k \in \Z_{\geq 0}$ let~$\mathcal{H}_k^\alpha = \mathcal{H}^\alpha \cap \mathcal{H}_k$.
Since~$\mathcal{H}^\alpha = \oplus_{k=0}^\infty \mathcal{H}_k^\alpha$
because~$\alpha$ preserves~$\mathcal{H}_k$ for~$k \in \Z_{\geq 0}$
(see~\eqref{alphageneq}), to prove that~$\mathcal{H}^\alpha$
is contained in~$\mathcal{H}'$, it will suffice to prove that~$\mathcal{H}_k^\alpha$
is contained in~$\mathcal{H}'$ for all~$k \in \Z_{\geq 0}$.
We will show by induction on~$k$ that~$\mathcal{H}^\alpha_k$ is in~$\mathcal{H}'$
for~$k \in \Z_{\geq 0}$.
This is clear for $k=0$.
Suppose that~$k \in \Z_{\geq 0}$ and that~$\mathcal{H}^\alpha_i\subset \mathcal{H}'$ for $0 \leq i \leq k$; we will prove that~$\mathcal{H}^\alpha_{k+1}\subset \mathcal{H}'$.
Let~$A \in \mathcal{H}^\alpha_{k+1}$ so that~$A\in\mathcal{H}_{k+1}$ and~$\alpha(A) =A$.
By Lemma~\ref{Halphacharlemma}
we may write
$$
A = \sum_{\substack{e_1,e_2,j \in \Z_{\geq 0},\\ e_1 +e_2 +2j=k+1}}
c(e_1,e_2,j)V^{e_1} X^{e_2} (Y_1^j + Y_2^j)
$$
where the~$c(e_1,e_2,j)$ are in $\Z$.
Since~$V,X \in \mathcal{H}'$, to prove that~$A \in \mathcal{H}'$
it will suffice to prove that~$Y_1^j + Y_2^j \in \mathcal{H}'$ for~$0 \leq 2j \leq k+1$.
If~$j \in \Z_{\geq 0}$ is such that~$0 \leq 2j < k+1$, then~$Y_1^j + Y_2^j \in \mathcal{H}'$ by
the induction hypothesis.
Assume that~$j \in \Z_{\geq 0}$ is such that~$2j = k+1$.
We may assume that~$j\geq 2$, since the case that~$j=1$ is trivial.
Now
\begin{equation}
\label{comalglemmaeq1}
Y^j=(Y_1+Y_2)^j = Y_1^j + Y_2^j +B
\end{equation}
where~$B$ is a sum of terms of the form $\cdots Y_1Y_2 \cdots$ or $\cdots Y_2Y_1 \cdots$.
Using~(73),~(74),~(77), and~(78) of \cite{JLPR},
we see that~$B = VB_0$ with~$B_0 \in \mathcal{H}_k$.
Since~$(Y_1+Y_2)^j-Y_1^j-Y_2^j$ is invariant under~$\alpha$ by~\eqref{alphageneq},
it follows that~$\alpha(B)=B$.
Therefore,~$V B_0 = V \alpha(B_0)$.
By Lemma~5.1 of~\cite{JLPR} we have~$\alpha(B_0) = B_0$.
Since~$B_0$ is in~$\mathcal{H}^\alpha_{2j-1} = \mathcal{H}^\alpha_{k}$,
the induction hypothesis now implies that~$B_0 \in \mathcal{H}'$;
therefore,~$B \in \mathcal{H}'$. Since~$Y^j \in \mathcal{H}'$,
we conclude from \eqref{comalglemmaeq1} that~$Y_1^j + Y_2^j$ is contained in~$\mathcal{H}'$.
\end{proof}

\begin{lemma}
\label{Hcircpolylemma}
The elements~$V$,~$X$, and~$Y$ are algebraically independent over~$\Z$.
\end{lemma}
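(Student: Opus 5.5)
We argue by a leading-term comparison in the double coset basis, using the graded structure. Since $V$, $X$, $Y$ are homogeneous of degrees $1,1,2$ and commute (Proposition~\ref{comalglemma}), a nontrivial polynomial relation among them may be taken homogeneous of some degree $k$, say $\sum_{e_1+e_2+2j=k} c(e_1,e_2,j)V^{e_1}X^{e_2}Y^j=0$. The task is to show that the $\Z$-span of these monomials in $\mathcal{H}_k$ has rank equal to the number of triples $(e_1,e_2,j)$. The cleanest route is to find, for each monomial, a distinguished double coset that appears in it with nonzero coefficient and does not appear in any "smaller" monomial under a suitable ordering; the relation then forces all coefficients to vanish, starting from the largest.

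The first key step is to compute leading terms. Since $V$ is the double coset of $w$, which normalizes $K$, we have $VT(a,b,c)=Kwd(a,b,c)K$, a single coset. From Table~\ref{phatable1} (the $a=b$, $b+2\le c-b$ row and its neighbours) we get, inductively, $X^{e}=T(0,0,e)+(\text{terms }T(a,b,e)\text{ with }a+b>0)+V(\cdots)$. From Table~\ref{phatable2} and \eqref{alphabetapropeq}, $Y_1T(a,b,c)$ contains $T(a,b+1,c+2)$ and $Y_2T(a,b,c)$ contains $T(a+1,b,c+2)$. I will attach to each monomial the invariant pair $(\delta,\text{triple})$ with $\delta=e_1 \bmod 2$, using $V^2=T(1,1,2)$ (since $w^2=\varpi\cdot 1$ up to $K$). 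The expected leading double coset of $V^{e_1}X^{e_2}Y^j$ is $V^{\delta}T(m+?, m+j,\ldots)$ with $m=\lfloor e_1/2\rfloor$, together with its $\alpha$-image; more precisely, $Y^j$ contributes $T^\alpha(0,j,2j)$ plus terms with larger $a$. So the monomial $V^{e_1}X^{e_2}Y^j$ should have leading term $V^\delta T^\alpha(m,m+j,k-\delta)$, where positivity of the coefficients in the tables (all $n_i,m_i\ge 0$) rules out cancellation.

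The second step is to verify that the map $(e_1,e_2,j)\mapsto(\delta,m,m+j)$ is injective for fixed $k$. Indeed $e_1=2m+\delta$ and $j$ are recovered, and then $e_2=k-e_1-2j$. This requires ordering the triples, for instance lexicographically by $(\delta, a, b)$ with smaller $a$ first, and checking that every other coset occurring in $V^{e_1}X^{e_2}Y^j$ is strictly beyond the leading one. Only $T(0,0,e)$-type corrections, $V$-terms (which change $\delta$), and increases of $a$ occur.

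The main obstacle is controlling the $V$-terms, since $n_5$ and $m_4,m_5,m_6$ create $V$-cosets that could coincide with leading terms of monomials with the other parity of $e_1$. To handle this I would order first by the $a$-coordinate of $d$ after absorbing $V^2$. Alternatively, and more robustly, I would avoid the ordering entirely: reduce modulo $q$, or specialize via a character. Concretely, I would use the homomorphisms $\chi:\mathcal{H}\to\C$ from \cite{JLPR} (Satake-type parameters) and show that $(\chi(V),\chi(X),\chi(Y))$ sweeps out a Zariski-dense subset of $\C^3$. Three independent complex parameters suffice, and this gives algebraic independence immediately. I would try the character route first if \cite{JLPR} gives explicit values of $\chi$ on $V,X,Y_1,Y_2$, and fall back on the leading-term argument otherwise.
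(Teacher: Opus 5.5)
\textbf{Gap: the character route you plan to try first cannot work.} Its key claim is false: the points $(\chi(V),\chi(X),\chi(Y))$ are \emph{not} Zariski dense in $\C^3$. Since $Y_1V=VY_2$ and $Y_2V=VY_1$ ((73), (74) of \cite{JLPR}), every character satisfies $\chi(V)\bigl(\chi(Y_1)-\chi(Y_2)\bigr)=0$. By Lemma~\ref{qasupportlemma}, $(Y_1-Y_2)^2=Z$, where $Z=Y^2+(\text{terms divisible by }V)$ is a polynomial in $V,X,Y$. Hence every character kills $VZ=V(Y_1-Y_2)^2$, and all character values lie on the hypersurface $x_1Z(x_1,x_2,x_3)=0$. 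Conceptually, after inverting $2$, $\mathcal{H}$ is generically a $2\times 2$ matrix algebra over its center, and $\Mat(2,\C)$ has no characters.

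The missing idea is how the paper gets around this. Take a relation $p=p_1(x_2,x_3)+x_1p_2(x_1,x_2,x_3)$ of minimal degree. If $p_1=0$, then $Vp_2(V,X,Y)=0$, and cancellation of $V$ (Lemma~5.1 of \cite{JLPR}) contradicts minimality. So $p_1\neq 0$. Now only the characters with $\chi(V)=\chi(Y_2)=0$ are needed; for these $(\chi(X),\chi(Y))$ is arbitrary in $\C^2$ (Lemma~4.1 of \cite{JLPR}), which forces $p_1=0$. Characters only have to see the $V$-free part; the $V$-divisible part is handled by cancellation.

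\textbf{The leading-term fallback.} This can be completed into a correct proof that genuinely differs from the paper's, but not with the lexicographic order on $(\delta,a,b)$ you first propose. In degree $3$, Table~\ref{XHptable} gives
$VX^2=VT(0,0,2)+(q+1)VT^\alpha(0,1,2)+(q^3+2q^2+q)VT(1,1,2)+(q-1)T(1,1,3)$.
Here $T(1,1,3)=V^2X$ is the leading term of another monomial and lies \emph{below} $VT(0,0,2)$ in that order, while the other terms lie above it. So there is no triangularity.

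Your later suggestion is the right one: order by $\mu=2a+\delta$, then by $b$. By Propositions~\ref{Xmultprop} and~\ref{Ymultprop}, multiplying $V^\delta T^\alpha(a,b,c)$ by $X$ or $Y$ never lowers $\mu$. The only $\mu$-preserving terms are $\tilde n_1V^\delta T^\alpha(a,b,c+1)$, $\tilde n_2V^\delta T^\alpha(a,b+1,c+1)$ and $\tilde m_1V^\delta T^\alpha(a,b+1,c+2)$, with $\tilde n_1=\tilde m_1=1$. Multiplication by $V$ raises $\mu$ by exactly $1$. Hence
$V^{e_1}X^{e_2}Y^j=V^\delta T^\alpha(m,m+j,k-\delta)+(\text{terms with larger }\mu\text{, or equal }\mu\text{ and larger }b)$.
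Your map $(e_1,e_2,j)\mapsto(\delta,m,m+j)$ is a bijection onto the index set $\{(\delta,a,b)\colon (a,b,k-\delta)\in S_\Delta^+\}$ of the orbit-sum basis of $\mathcal{H}^\alpha_k$. So the transition matrix is unitriangular over $\Z$.

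This proves independence and, at the same time, the generation statement of Proposition~\ref{comalglemma}, using only the multiplication tables rather than the character and cancellation lemmas of \cite{JLPR}. Given Proposition~\ref{comalglemma}, the bijection alone already suffices: $\mathcal{H}^\alpha_k$ is free of rank equal to the number of degree-$k$ monomials, and these span it.

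As written, however, your proposal leaves the decisive monotonicity check undone, and the route you designate as primary cannot succeed.
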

\begin{proof}
Assume that there is some non-zero~$p(x_1,x_2,x_3) \in \Z[x_1,x_2,x_3]$
such that~$p(V,X,Y) =0$;
we will obtain a contradiction.
Choose such a~$p(x_1,x_2,x_3)$ with minimal degree, and write
$$
p(x_1,x_2,x_3) = p_1(x_2,x_3) + x_1 p_2(x_1,x_2,x_3)
$$
where~$p_1(x_2,x_3) \in \Z[x_2,x_3]$ and~$p_2(x_1,x_2,x_3) \in \Z[x_1,x_2,x_3]$.
We have that~$p_1(x_2,x_3) \neq 0$ since otherwise,~$p(x_1,x_2,x_3) = x_1 p_2(x_1,x_2,x_3)$,
so that~$p_2(V,X,Y) =0$ by Lemma~5.1 of \cite{JLPR};
this would contradict
the minimality of the degree of~$p(x_1,x_2,x_3)$.
Now let~$z_1,z_2 \in \C$.
By Lemma 4.1 of \cite{JLPR} there exists a homomorphism~$\chi: \mathcal{H} \to \C$
such that~$\chi(V)=0$, $\chi(Y_2)=0$, $\chi(X) =z_1$, and~$\chi(Y_1)=z_2$.
Applying~$\chi$ to
$$
p(V,X,Y_1+Y_2)= p_1(X,Y_1+Y_2) + V p_2(V,X,Y_1+Y_2)=0
$$
we obtain~$p_1(z_1,z_2)=0$. Since this holds for all~$z_1,z_2 \in \C$, we find
that~$p_1(x_2,x_3)=0$, a contradiction.
\end{proof}

\begin{lemma}
\label{Hprimemodlemma}
Regard~$\mathcal{H}$ as an~$\mathcal{H}^\alpha$-module, and let $M$
be the $\mathcal{H}^\alpha$-submodule of~$\mathcal{H}$ spanned
by~$1$ and~$Y_1$. Then~$M = \mathcal{H}$.
\end{lemma}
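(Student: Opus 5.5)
We need to show every element of $\mathcal{H}$ lies in $\mathcal{H}^\alpha + \mathcal{H}^\alpha Y_1$. By Theorem~3.10 of~\cite{JLPR}, $\mathcal{H}$ is spanned over~$\Z$ by the monomials $V^{e_1}X^{e_2}Y_1^{e_3}Y_2^{e_4}$ with $e_3e_4=0$. Since $V,X\in\mathcal{H}^\alpha$ and $M$ is an $\mathcal{H}^\alpha$-module, it suffices to prove that $Y_1^j\in M$ and $Y_2^j\in M$ for all $j\geq 0$. Moreover, $Y_2^j=(Y_1^j+Y_2^j)-Y_1^j$ and $Y_1^j+Y_2^j\in\mathcal{H}^\alpha$ by Lemma~\ref{Halphacharlemma}. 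So it reduces to showing $Y_1^j\in M$ for all $j$.

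I would prove $Y_1^j\in M$ by induction on $j$, or more robustly by induction on degree. The cases $j=0,1$ are trivial. For the inductive step, write $Y_1^{j+1}=Y_1\cdot Y_1^j=Y_1(A+BY_1)=AY_1+BY_1^2$ with $A,B\in\mathcal{H}^\alpha$. Here I use that $Y_1$ commutes with elements of $\mathcal{H}^\alpha$ up to lower-order terms; alternatively, work with left multiplication throughout, writing $Y_1^{j+1}=Y_1^j Y_1$ and keeping coefficients on the left. Everything then reduces to expressing $Y_1^2$ in $M$, via a quadratic relation
\[
Y_1^2 = Y\,Y_1 - Y_2Y_1,
\]
where $Y_2Y_1=VB_0$ by relations (73), (74), (77), (78) of~\cite{JLPR}, with $B_0\in\mathcal{H}_3$.

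This gives $Y_1^2-YY_1=-VB_0$, and $VB_0$ has lower degree in the sense that $B_0\in\mathcal{H}_3$. So the cleanest approach is a degree induction: show $\mathcal{H}_k\subset M$ for all $k$. Suppose $\mathcal{H}_i\subset M$ for $i<k$ and take a monomial of degree $k$. If $e_1>0$, it equals $V\cdot(\text{degree } k-1)$, which lies in $VM\subset M$ because $V\in\mathcal{H}^\alpha$. Monomials with $e_1=0$ and $e_4>0$ reduce to those with $e_3>0$ modulo $\mathcal{H}^\alpha$-multiples, as above. For $X^{e_2}Y_1^{e_3}$ with $e_3\geq2$, I write $Y_1^{e_3}=Y_1^{e_3-2}(YY_1-Y_2Y_1)$. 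Then $Y_1^{e_3-2}Y_2Y_1=Y_1^{e_3-2}VB_0$, and $V$ commutes with $Y_1$ up to $\alpha$, since $VY_1=Y_2V$ as $V$ normalizes $K$ and $\alpha$ is conjugation by $w$. So this term is $V\cdot(\text{degree } k-1)$, which lies in $M$ by induction. Also $Y_1^{e_3-2}YY_1=Y\,Y_1^{e_3-1}$ because $Y$ commutes with $Y_1$ (by Lemma~3.8 of~\cite{JLPR}, or by checking that $Y_1Y_2=Y_2Y_1$, both being equal to $V$ times an $\alpha$-fixed element). Here $Y_1^{e_3-1}$ has degree $k-2$, so it lies in $M$, and then $Y\cdot M\subset M$.

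The main obstacle is verifying these commutation facts precisely from the relations (72)–(78) of~\cite{JLPR}: that $Y_1$ commutes with $X$ and $Y$, and that $VY_1=Y_2V$. The first is needed to move $Y_1$ past coefficients; the second follows from $\alpha$ being conjugation by $w$, since $V=KwK=wK$ and so $VCV^{-1}$-type identities hold in the form $VC=\alpha(C)V$. I would check these first, then assemble the degree induction above.
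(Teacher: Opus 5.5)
Most of your setup is fine: the reduction to pure powers $Y_1^j$, the identity $VY_1=Y_2V$, and the fact that $Y_2Y_1\in V\mathcal{H}_3$. The gap is in the step that handles $Y_1^{e_3-2}YY_1$ for $e_3\ge 3$, which relies on a false claim. Neither $Y$ nor $X$ commutes with $Y_1$. The commutativity supplied by Lemma~3.8 of~\cite{JLPR} (see Proposition~\ref{comalglemma}) is among $V$, $X$, $Y$ only. In fact, relations (75)--(77) of~\cite{JLPR}, as used in the proof of Lemma~\ref{qasupportlemma}, give
\[
Y_1X-XY_1=-(q^2-1)\,V(Y_1-Y_2),\qquad
Y_1Y_2-Y_2Y_1=(q-1)\,V\bigl((q^2-1)V-X\bigr)(Y_1-Y_2).
\]
Both right-hand sides are nonzero combinations of distinct basis monomials from Theorem~3.10 of~\cite{JLPR}, so neither commutator vanishes. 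In particular $Y_1Y_2\neq Y_2Y_1$: the cofactors of $V$ in $Y_1Y_2$ and $Y_2Y_1$ are interchanged by $\alpha$, not fixed by it. Hence $Y_1^{e_3-2}YY_1\neq YY_1^{e_3-1}$ in general, and the check you list as the ``main obstacle'' would come out negative. Your first attempt, $Y_1\cdot Y_1^j=Y_1(A+BY_1)=AY_1+BY_1^2$, fails for the same reason.

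The repair is to keep all $\mathcal{H}^\alpha$-coefficients on the left and put the quadratic factor on the left as well:
\[
Y_1^{e_3}=Y_1^2\,Y_1^{e_3-2}=Y\,Y_1^{e_3-1}-VB_0\,Y_1^{e_3-2}.
\]
Both terms lie in $M$ by the induction hypothesis, with no commutation needed. Your own ``alternatively'' remark already gives a complete proof. The inclusion $\mathcal{H}_3\subset M$ is immediate from the spanning monomials, using $Y_2=Y-Y_1$. So $Y_1^2=YY_1-VB_0\in M$, hence $MY_1=\mathcal{H}^\alpha Y_1+\mathcal{H}^\alpha Y_1^2\subset M$, and $Y_1^j\in M$ for all $j$ by right multiplication. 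With either repair your argument is essentially the paper's. The paper also avoids commutation, by writing $(Y_1^{j-1}+Y_2^{j-1})Y_2=Y_1^{j-1}Y_2+Y_2^j$ with $Y_1^{j-1}Y_2\in V\mathcal{H}_k$ by (78) of~\cite{JLPR}.
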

\begin{proof}
We will prove that~$\mathcal{H}_k \subset M$ for all~$k \in \Z_{\geq 0}$
by induction on~$k$.
It is clear that~$\mathcal{H}_0,\mathcal{H}_1,\mathcal{H}_2 \subset M$.
Assume that~$k \geq 2$ and~$\mathcal{H}_j \subset M$ for~$1 \leq j \leq k$;
we will prove that $\mathcal{H}_{k+1} \subset M$. Let $A \in \mathcal{H}_{k+1}$.
By Theorem~3.10 of~\cite{JLPR} we have
\begin{equation}
\label{Hprimemodlemmaeq1}
A=
\sum_{\substack{e_1,e_2,e_3,e_4 \in \Z_{\geq 0},\\ e_1+e_2+2e_3+2e_4 =k+1,\\
e_3e_4=0}}
n(e_1,e_2,e_3,e_4) V^{e_1} X^{e_2} Y_1^{e_3} Y_2^{e_4}
\end{equation}
where~$n(e_1,e_2,e_3,e_4) \in \Z$ for~$e_1,e_2,e_3,e_4$ as in~\eqref{Hprimemodlemmaeq1}.
From~\eqref{Hprimemodlemmaeq1}, we see that to prove that~$A \in M$
it will suffice to prove that~$Y_1^j,Y_2^j \in M$ for~$0 \leq 2j \leq k+1$.
If~$k+1$ is odd, then~$2j \leq k$,
so that~$Y_1^j,Y_2^j \in M$ for~$0 \leq 2j \leq k+1$ by the induction hypothesis.
Assume that~$k+1$ is even
and set~$j=(k+1)/2$; we need to prove that~$Y_1^j, Y_2^j \in M$.
Since $Y_1^j+Y_2^j \in \mathcal{H}^\alpha \subset M$ by~\eqref{alphageneq}, it will
suffice to prove that $Y_2^j \in M$. Now
\begin{equation}
\label{Hprimemodlemmaeq2}
(Y_1^{j-1}+Y_2^{j-1})Y_2 = Y_1^{j-1}Y_2 +Y_2^j,
\end{equation}
and the induction hypothesis implies that this element is in $M$ (because
$Y_1^{j-1}+Y_2^{j-1} \in \mathcal{H}^\alpha$ and $Y_2 \in M$
by the induction hypothesis, so that $(Y_1^{j-1}+Y_2^{j-1})Y_2 \in M$).
By (78) of \cite{JLPR}, $Y_1^{j-1}Y_2 =VB$ for some $B \in \mathcal{H}$;
evidently, $B \in \mathcal{H}_k$ as $Y_1^{j-1}Y_2 \in \mathcal{H}_{k+1}$.
By the induction hypothesis, $B \in M$. It follows that $Y_1^{j-1}Y_2 =VB \in M$.
From \eqref{Hprimemodlemmaeq2}, we now have $Y_2^j \in M$, as desired.
\end{proof}

\begin{proposition}
\label{Xmultprop}
Let~$(a,b,c) \in S_\Delta^+$. There exist
unique integers~$\tilde n_1$,~$\tilde n_2$,~$\tilde n_3$,~$\tilde n_4$,
and~$\tilde n_5$ such that
\begin{align*}
X  T^\alpha(a,b,c)&  =  \tilde n_1  T^\alpha(a,b,c+1)
+ \tilde n_2  T^\alpha (a,b+1,c+1)\\
&\quad +\tilde n_3  T^\alpha (a+1,b,c+1)
+\tilde n_4  T^\alpha(a+1,b+1,c+1) \\
&\quad +\tilde n_5  V  T^\alpha(a,b,c)
\end{align*}
and~$\tilde n_i = 0$ if~$(e,f,g) \notin S_\Delta^+$, where~$(e,f,g)$ is the triple
of integers in the $i$-th summand. The integers~$\tilde n_1$,~$\tilde n_2$,~$\tilde n_3$,~$\tilde n_4$,
and~$\tilde n_5$ are as in Table~\ref{XHptable}.
\end{proposition}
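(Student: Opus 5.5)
Uniqueness is immediate. The elements $T^\alpha(e,f,g)$ for distinct $(e,f,g)\in S_\Delta^+$, together with $V T^\alpha(e,f,g)$, are sums over pairwise disjoint sets of double cosets; this follows from the uniqueness in \eqref{uniquerepeq}. So they are $\Z$-linearly independent. The five triples $(a,b,c+1)$, $(a,b+1,c+1)$, $(a+1,b,c+1)$, $(a+1,b+1,c+1)$ in degree $c+1$ are distinct, and the last summand is of the form $V T^\alpha$. Hence the coefficients are unique once the terms with triples outside $S_\Delta^+$ are set to zero.

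For existence we compute directly. Write $T^\alpha(a,b,c)=T(a,b,c)+T(b,a,c)$ when $a<b$, and $T^\alpha(a,a,c)=T(a,a,c)$. Apply Theorem~\ref{3p5theorem} to $XT(a,b,c)$ and to $XT(b,a,c)$, both of which lie in $S_\Delta$. The result is a combination of the $T(e,f,c+1)$ and of $VT(a,b,c)$, $VT(b,a,c)$. Since $X$ and $T^\alpha(a,b,c)$ are fixed by $\alpha$, the product $XT^\alpha(a,b,c)$ is $\alpha$-fixed. By \eqref{alphabetapropeq}, and because $\alpha$ commutes with left multiplication by $V$ since $\alpha(V)=V$, its coefficients on $T(e,f,g)$ and $T(f,e,g)$ agree. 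The same holds for the $V$-terms. So the expansion regroups into $T^\alpha$'s with $e\le f$.

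Concretely, for $a<b$, the term $T(a+1,b,c+1)$ coming from $XT(a,b,c)$ pairs with the term $T(b,a+1,c+1)$ coming from $XT(b,a,c)$, where the latter uses the $a>b$ row of Table~\ref{phatable1} with the roles of $a$ and $b$ swapped. When $a+1=b$, both $T(a+1,b,c+1)$ and $T(b,a+1,c+1)$ are the single coset $T(b,b,c+1)$. One then reads off each $\tilde n_i$ as the coefficient of a representative with $e\le f$.

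The main work, and the likely source of error, is the case bookkeeping. One has to split by $a<b$ versus $a=b$, and by whether $b=c-b$, $b+1=c-b$, or $b+2\le c-b$; the last condition decides whether $(a,b+1,c+1)$ and $(a+1,b+1,c+1)$ lie in $S_\Delta^+$. In the subcase $a+1=b$ the coefficient $\tilde n_3$ is the sum of two contributions: $n_3$ from $XT(a,b,c)$ with $a<b$, and $n_2$ from $XT(b,a,c)$ with $b>a$. This collapse must be tracked carefully, and it likely produces special entries such as $q^2+q^2$ or $2q^2$. For $a=b$, the terms $T(a,a+1,c+1)$ and $T(a+1,a,c+1)$ from the $a=b$ row combine into the single element $T^\alpha(a,a+1,c+1)$. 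Symmetry forces $n_2=n_3$ there, so $\tilde n_2=q$ or $q+1$, and $\tilde n_3=0$ because $(a+1,a,c+1)\notin S_\Delta^+$. I would carry out each subcase from Table~\ref{phatable1}, cross-check the results against the $\alpha$-symmetry, and record them as Table~\ref{XHptable}.
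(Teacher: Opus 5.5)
Your proposal is correct and follows the paper's own argument: the paper's proof is simply the direct calculation applying Theorem~\ref{3p5theorem} to $T(a,b,c)$ and $T(b,a,c)$ and regrouping into $T^\alpha$'s, as you describe (including the collapse giving $\tilde n_3=2q^2$ when $a+1=b$), and your uniqueness argument via \eqref{uniquerepeq} supplies a step the paper leaves implicit. There are two trivial slips: you list four triples but call them ``five,'' and in the case $a=b$, $b=c-b$ one has $\tilde n_2=0$ rather than $q$ or $q+1$.
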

\begin{proof}
This follows from a calculation applying Theorem~\ref{3p5theorem} to the subalgebra~$\mathcal{H}^\alpha$.
\end{proof}
\begin{table}
\caption{The integers~$\tilde n_i$ from Proposition~\ref{Xmultprop}}
\label{XHptable}
\begin{tabular}{l@{\hskip 0.5cm}l@{\hskip 1cm}l@{\hskip 1cm}l@{\hskip 1cm}l@{\hskip 1cm}l@{\hskip 1cm}l}
\toprule
\multicolumn{2}{c}{conditions}& $\tilde n_1$ & $\tilde n_2$ & $\tilde n_3$ & $\tilde n_4$ & $\tilde n_5$ \\
\midrule
$a=b$ & $b=c-b$        & $1$ & $0$ & $0$ & $0$ & $0$\\
      & $b+1 =c-b$     & $1$ & $q+1$ & $0$ & $q^3+2q^2+q$ & $q-1$ \\
      & $b+2 \leq c-b$ & $1$ & $q$ & $0$ & $q^3$ &  $q-1$ \\
\midrule
$a+1 = b$ & $b=c-b$ & $1$ & $0$ & $2q^2$ & $0$ & $q^2-1$ \\
 & $b+1=c-b$  & $1$ & $q+1$ &  $2q^2$ & $q^3+q^2$ & $q^2-1$ \\
 & $b+2\leq c-b$ & $1$ & $q$ & $2q^2$ & $q^3$ & $q^2-1$ \\
\midrule
$a+1 < b$ & $b=c-b$ & $1$ & $0$ & $q^2$ & $0$ & $q^2-1$ \\
 & $b+1=c-b$  & $1$ & $q+1$ &  $q^2$ & $q^3+q^2$ & $q^2-1$ \\
 & $b+2\leq c-b$ & $1$ & $q$ & $q^2$ & $q^3$ & $q^2-1$ \\
\bottomrule
\end{tabular}
\end{table}

\begin{table}
\caption{The integers~$\tilde m_i$ from Proposition~\ref{Ymultprop}}
\label{YHptable}
\begin{adjustbox}{width=1\textwidth}
\begin{tabular}{lllllllllll}
\toprule
\multicolumn{2}{c}{conditions}& $\tilde m_1$ & $\tilde m_2$ & $\tilde m_3$ & $\tilde m_4$ & $\tilde m_5$ & $\tilde m_6$ & $\tilde m_7$ & $\tilde m_8$ & $\tilde m_9$ \\
\midrule
$a=b$ & $b=c-b$ & $1$ & $0$ & $0$ & $0$ & $0$ & $0$ & $0$ & $0$ & $0$\\
& $b+1=c-b$ & $1$ & $0$ & $2q^2$ & $0$ & $0$ & $0$ & $q^2-1$ & $0$ & $0$ \\
& $b+2=c-b$ & $1$ & $0$ & $2(q^2-q)$ & $q^3+q^2$ & $0$ & $0$ & $q^2-q$ & $0$ & $0$ \\
& $b+3 \leq c-b$ & $1$ & $0$ & $2(q^2-q)$ & $q^3$ & $0$ & $0$ & $q^2-q$ & $0$ & $0$ \\
\midrule
$a+1=b$ & $b=c-b$ & $1$ & $2q$ & $q^3-q^2$ & $0$ & $2(q^4+q^3)$ & $q-1$ & $0$ & $2(q^3-q^2)$ & $0$\\
& $b+1=c-b$ & $1$ & $2q$ & $q^3$ & $0$ & $2q^4$ & $q-1$ & $q^2-1$ & $2(q^3-q^2)$ & $q^4-q^2$ \\
& $b+2=c-b$ & $1$ & $2q$ & $q^3-q$ & $q^3+q^2$ & $2q^4$ & $q-1$ & $q^2-q$ & $2(q^3-q^2)$ & $q^4-q^3$\\
& $b+3 \leq c-b$ & $1$ & $2q$ & $q^3-q$ & $q^3$ & $2q^4$ & $q-1$ & $q^2-q$ & $2(q^3-q^2)$ & $q^4-q^3$\\
\midrule
$a+1 < b$ & $b=c-b$ & $1$ & $q$ & $q^3-q^2$ & $0$ & $q^4$ & $q-1$ & $0$ & $q^3-q^2$ & $0$\\
& $b+1=c-b$ & $1$ & $q$ & $q^3$ & $0$ & $q^4$ & $q-1$ & $q^2-1$ & $q^3-q^2$ & $q^4-q^2$ \\
& $b+2=c-b$ & $1$ & $q$ & $q^3-q$ & $q^3+q^2$ & $q^4$ & $q-1$ & $q^2-q$ & $q^3-q^2$ & $q^4-q^3$ \\
& $b+3 \leq c-b$ & $1$ & $q$ & $q^3-q$ & $q^3$ & $q^4$ & $q-1$ & $q^2-q$ & $q^3-q^2$ & $q^4-q^3$ \\
\bottomrule
\end{tabular}
\end{adjustbox}
\end{table}

\begin{proposition}
\label{Ymultprop}
Let~$(a,b,c) \in S_\Delta^+$. There exist
unique integers~$\tilde m_i$ for~$1 \leq i \leq 9$ such that
\begin{align*}
Y  T^\alpha(a,b,c) &  = \tilde m_1  T^\alpha(a,b+1,c+2)   \\
&  \quad + \tilde m_2  T^\alpha(a+1,b,c+2)\\
&  \quad + \tilde m_3  T^\alpha(a+1,b+1,c+2) \\
&  \quad + \tilde m_4  T^\alpha(a+1,b+2,c+2) \\
&  \quad + \tilde m_5  T^\alpha(a+2,b+1,c+2) \\
&  \quad + \tilde m_6 V  T^\alpha(a,b,c+1) \\
&  \quad + \tilde m_7 V  T^\alpha(a,b+1,c+1) \\
&  \quad + \tilde m_8 V   T^\alpha(a+1,b,c+1) \\
&  \quad + \tilde m_9 V  T^\alpha(a+1,b+1,c+1).
\end{align*}
and~$\tilde m_i = 0$ if~$(e,f,g) \notin S_\Delta^+$, where~$(e,f,g)$ is the triple
of integers in the~$i$-th summand. The integers~$\tilde m_i$ for~$1 \leq i \leq 9$
are as in Table~\ref{YHptable}.
\end{proposition}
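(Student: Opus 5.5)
The plan is to compute $Y T^\alpha(a,b,c)$ directly from Theorem~\ref{3p6theorem}, using the automorphism $\alpha$ to obtain the $Y_2$ contributions from the $Y_1$ contributions. Since $\alpha(Y_1)=Y_2$, $\alpha(V)=V$ and $\alpha(T(e,f,g))=T(f,e,g)$ by~\eqref{alphabetapropeq}, applying $\alpha$ to~\eqref{degree2multiplicationthmeq1} gives, for $(e,f,g)\in S_\Delta$,
\begin{align*}
Y_2 T(f,e,g) &= m_1 T(f+1,e,g+2)+m_2 T(f+1,e+1,g+2)+m_3T(f+1,e+2,g+2)\\
&\quad +m_4VT(f,e,g+1)+m_5VT(f,e+1,g+1)+m_6VT(f+1,e+1,g+1),
\end{align*}
where the $m_i=m_i(e,f,g)$ are read from Table~\ref{phatable2} at $(e,f,g)$.

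Write $T^\alpha(a,b,c)=T(a,b,c)+\epsilon\,T(b,a,c)$, with $\epsilon=1$ if $a<b$ and $\epsilon=0$ if $a=b$. Then
\[
YT^\alpha(a,b,c)=Y_1T(a,b,c)+Y_2T(a,b,c)+\epsilon\bigl(Y_1T(b,a,c)+Y_2T(b,a,c)\bigr).
\]
The term $Y_1T(a,b,c)$ is given by Theorem~\ref{3p6theorem} at $(a,b,c)$, where $a\le b$. The term $Y_2T(a,b,c)$ is the $\alpha$-image of $Y_1T(b,a,c)$, so it is also given by Theorem~\ref{3p6theorem}, at $(b,a,c)$, where the first entry exceeds the second. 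When $\epsilon=1$, the term $Y_1T(b,a,c)$ uses the $a>b$ rows of Table~\ref{phatable2}. The term $Y_2T(b,a,c)$ equals $\alpha(Y_1T(a,b,c))$, which is the $\alpha$-image of the first term. Hence
\[
YT^\alpha(a,b,c)=(1+\alpha)\bigl(Y_1T(a,b,c)+\epsilon\, Y_1T(b,a,c)\bigr),
\]
and this is manifestly $\alpha$-invariant. When $a=b$ the formula is $(1+\alpha)(Y_1T(a,a,c))$, so diagonal terms $T(e,e,g)$ appear with coefficient doubled. This accounts for the factors of $2$ in the rows of Table~\ref{YHptable} where targets such as $(a+1,a+1,c+2)$ are diagonal.

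Next, regroup the resulting sum of $T(e,f,g)$ and $VT(e,f,g)$ into terms $T^\alpha$ indexed by $S_\Delta^+$. For each double coset appearing, choose the representative with $e\le f$ and collect coefficients. The target triples arising from $Y_1T(a,b,c)$ are $(a,b+1)$, $(a+1,b+1)$, $(a+2,b+1)$, together with $V$-terms at $(a,b)$, $(a+1,b)$ and $(a+1,b+1)$. From $Y_1T(b,a,c)$, after swapping, they are $(a+1,b)$, $(a+1,b+1)$, $(a+1,b+2)$, with $V$-terms at $(a,b)$, $(a,b+1)$ and $(a+1,b+1)$. Together these give exactly the nine summands in the statement. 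Uniqueness is immediate, because the $T^\alpha$ and $VT^\alpha$ indexed by $S_\Delta^+$ are supported on disjoint sets of double cosets, by~\eqref{uniquerepeq}.

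The main obstacle is the case bookkeeping: matching the conditions of Table~\ref{phatable2} at $(a,b,c)$ and $(b,a,c)$ to the rows of Table~\ref{YHptable}. The conditions $b=c-b$, $b+1=c-b$, and so on must be translated into the conditions on $a$ that appear in the $a>b$ rows for $(b,a,c)$. One must also handle the degeneracies when $a+1=b$. In that case $(a+1,b)$ and $(a+2,b+1)$ coincide with diagonal or swapped triples, and contributions from both halves merge, which produces for instance $\tilde m_2=2q$ and $\tilde m_5=2q^4$, $2(q^4+q^3)$. One must further check that triples falling outside $S_\Delta^+$, such as $(a+2,b+1)$ when $a+1=b$, which swaps to an element of $S_\Delta^+$, are correctly reassigned. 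I would organize the computation by the three blocks $a=b$, $a+1=b$ and $a+1<b$, and within each block by the four values of $c-2b$, checking every entry against Table~\ref{YHptable}.
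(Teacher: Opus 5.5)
Your proposal is correct and follows the same route as the paper, whose proof is only the one-line remark that Table~\ref{YHptable} comes from applying Theorem~\ref{3p6theorem} (together with $\alpha$) inside $\mathcal{H}^\alpha$. Your identity $YT^\alpha(a,b,c)=(1+\alpha)\bigl(Y_1T(a,b,c)+\epsilon\,Y_1T(b,a,c)\bigr)$ makes that calculation explicit, and carrying it out reproduces every entry of the table. One small slip: when $a+1=b$, the triple $(a+2,b+1)=(b+1,b+1)$ is diagonal and lies in $S_\Delta^+$, so its coefficient is doubled rather than reassigned; the triples that actually leave $S_\Delta^+$ and must be swapped are $(a+2,b+1)$ and $(a+1,b)$ in the case $a=b$.
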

\begin{proof}
This follows from a calculation applying Theorem~\ref{3p6theorem} to the subalgebra~$\mathcal{H}^\alpha$.
\end{proof}

\section{Rationality}
\label{sec4}
\label{rationalitysec}

In this section we obtain a rationality result
(Theorem~\ref{generalrationalitytheorem}) for the
series~$\sum_{k=0}^\infty T(q^k)t^k$.
To be more precise,
let~$A=\oplus_{k \geq 0} A_k$ be a graded commutative ring.
Let~$(T_k)_{k \geq 0}$ be a sequence of elements of~$A$ with
the property that~$T_k \in A_k$ for~$k \geq 0$.
We say that~$\left(T_k\right)_{k \geq 0}$ is \emph{recursive}
if there exists a positive integer~$m$,
elements~$c_0,\dots,c_m$ of~$A$ with~$c_i \in A_i$ for~$0 \leq i \leq m$
and~$c_0 \in A^\times$ (i.e.,~$c_0$ is a unit of the ring~$A$), and
$$
\sum_{i=0}^m c_i T_{k-i} =0 \qquad \textnormal{for $k \geq m$}.
$$
If $(T_k)_{k \geq 0}$ is recursive and~$t$ is an indeterminate, then in the power series
ring~$A[[t]]$ we have the identity
\begin{equation}
\label{recideq}
\left( \sum_{i=0}^m c_i t^i \right)
\left( \sum_{k=0}^\infty T_k t^k \right)
= \sum_{k=0}^{m-1} \left( \sum_{i=0}^k c_i T_{k-i} \right) t^k.
\end{equation}
Since~$\sum_{i=0}^m c_i t^i$ is a unit of the ring~$A[[t]]$
because~$c_0 \in A^\times$, it follows that the power series
$\sum_{k=0}^\infty T_k t^k$ is a rational function, i.e., is an element
of the field of rational functions~$A(t) \subset A[[t]]$.
In this section we will prove that the sequence~$\left(T(q^k)\right)_{k \geq 0}$
is recursive.

\begin{lemma}\label{recursionlemma}
For~$k \in \Z_{\geq 0}$ define
$$
B_k=\sum_{b=0}^{\lfloor k/2 \rfloor} T^\alpha(0,b,k).
$$
For~$k \geq 2$ we have
\begin{equation}
T(q^k)  = B_k +V T(q^{k-1}). \label{generalrationalitytheoremeq5}
\end{equation}
\end{lemma}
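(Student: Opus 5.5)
The plan is to start from the decomposition \eqref{Tqkalphaeq} of $T(q^k)$ and compare it term by term with the same decomposition of $T(q^{k-1})$. The key fact is that $w$ normalizes $K$ and $w^2=\varpi\cdot 1$, so multiplying by $V$ only shifts double cosets.

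First I record two facts about $w$. A direct computation gives $w^2=\mathrm{diag}(\varpi,\varpi,\varpi,\varpi)$. Since $w$ normalizes $K$, for any $g\in\Delta$ we have $KwK\cdot KgK=KwgK$ in $\mathcal H$, because $KwK=wK$ is a single coset. In particular $V^2=Kw^2K$. Moreover, for $(a,b,c)\in D_\Delta$ with $a,b\geq 1$ and $c-a,c-b\geq 1$,
\[
d(a,b,c)=\varpi\, d(a-1,b-1,c-2)=w^2\, d(a-1,b-1,c-2).
\]
Hence $T(a,b,c)=V^2\,T(a-1,b-1,c-2)$, and likewise with $a$ and $b$ swapped. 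It follows that $T^\alpha(a,b,c)=V^2\,T^\alpha(a-1,b-1,c-2)$ whenever $(a,b,c)\in S_\Delta^+$ with $a\geq 1$.

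Next I split the first sum in \eqref{Tqkalphaeq} according to whether $a=0$ or $a\geq 1$. The terms with $a=0$ give exactly $B_k$, since $(0,b,k)\in S_\Delta^+$ if and only if $0\le b\le\lfloor k/2\rfloor$. For the terms with $a\geq1$, the map $(a,b)\mapsto(a-1,b-1)$ is a bijection
\[
\{1\le a\le b\le\lfloor k/2\rfloor\}\;\longrightarrow\;\{0\le a'\le b'\le\lfloor (k-2)/2\rfloor\},
\]
because $\lfloor k/2\rfloor-1=\lfloor (k-2)/2\rfloor$. Here $k\ge 2$ ensures that $k-2\geq0$. Using the previous paragraph, this gives
\[
T(q^k)=B_k+V^2\sum_{(a,b,k-2)\in S_\Delta^+}T^\alpha(a,b,k-2)+V\sum_{(a,b,k-1)\in S_\Delta^+}T^\alpha(a,b,k-1).
\]

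Finally I apply \eqref{Tqkalphaeq} with $k-1\geq 1$ in place of $k$, so that the $V$-term is present:
\[
T(q^{k-1})=\sum_{(a,b,k-1)\in S_\Delta^+}T^\alpha(a,b,k-1)+V\sum_{(a,b,k-2)\in S_\Delta^+}T^\alpha(a,b,k-2).
\]
Multiplying by $V$ reproduces exactly the last two terms of the display above, which proves \eqref{generalrationalitytheoremeq5}.

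The only real point needing care is the identity $KwK\cdot KgK=KwgK$ in $\mathcal H$, used to get $V^2=Kw^2K$ and $V^2T(a-1,b-1,c-2)=T(a,b,c)$. This holds because $KwK=wK$ consists of a single left coset, so the Hecke product has a single term with multiplicity one. Everything else is index bookkeeping.
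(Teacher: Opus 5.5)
Your proof is correct and follows the paper's argument. The paper introduces $T_0(q^k)=\sum_{(a,b,k)\in S_\Delta^+}T^\alpha(a,b,k)$ and shows $T_0(q^k)=B_k+V^2T_0(q^{k-2})$ by the same $a=0$ versus $a\ge 1$ split and shift, then uses $T(q^k)=T_0(q^k)+VT_0(q^{k-1})$ twice, exactly as you do inline; your justification of $T^\alpha(a,b,c)=V^2T^\alpha(a-1,b-1,c-2)$ via $w^2=\varpi$ and $KwK=wK$ spells out a step the paper leaves implicit.
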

\begin{proof}
For~$k \in \Z_{\geq 0}$ define the following element of~$\mathcal{H}^\alpha$:
\begin{equation}
\label{generalrationalitytheoremeq2}
T_0(q^k) = \sum_{(a,b,k) \in S_\Delta^+} T^\alpha (a,b,k).
\end{equation}
By~\eqref{Tqkalphaeq} we have, for~$k \geq 1$,
\begin{equation}
\label{generalrationalitytheoremeq3}
T(q^k) = T_0(q^k) + V T_0 (q^{k-1}).
\end{equation}
Let~$k \geq 2$. Then
\begin{align}
T_0(q^k) &= \sum_{(a,b,k) \in S_\Delta^+} T^\alpha (a,b,k) \nonumber \\
&= \sum_{(0,b,k) \in S_\Delta^+} T^\alpha (0,b,k)
+\sum_{\substack{(a,b,k) \in S_\Delta^+,\\ a>0}} T^\alpha (a,b,k)\nonumber \\
&= B_k
+V^2 \sum_{\substack{(a,b,k) \in S_\Delta^+,\\ a>0}} T^\alpha (a-1,b-1,k-2)\nonumber \\
&= B_k
+V^2 \sum_{(a,b,k-2) \in S_\Delta^+} T^\alpha (a,b,k-2)\nonumber \\
& = B_k + V^2 T_0(q^{k-2}). \label{generalrationalitytheoremeq4}
\end{align}
Therefore,
\begin{align}
T(q^k)
& = T_0(q^k) + V T_0(q^{k-1})
\qquad \text{(by~\eqref{generalrationalitytheoremeq3})} \nonumber \\
&= B_k + V^2 T_0(q^{k-2}) + V T_0(q^{k-1})
\qquad \text{(by~\eqref{generalrationalitytheoremeq4})} \nonumber  \\
&= B_k + V^2 T_0(q^{k-2}) + V \left( T(q^{k-1}) - V T_0(q^{k-2}) \right)
\qquad \text{(by~\eqref{generalrationalitytheoremeq3})} \nonumber \\
& = B_k +V T(q^{k-1}). \nonumber
\end{align}
This proves~\eqref{generalrationalitytheoremeq5}.
\end{proof}

\begin{lemma}
\label{00krecursion}
For~$k \in \Z_{\geq 0}$ define
\begin{equation}
\label{akdefeq}
a_k = T^\alpha (0,0,k).
\end{equation}
We have
\begin{equation}
\label{00krecursioneq1}
a_{k+4} =A a_{k+3}+B a_{k+2}+C a_{k+1} +D a_k \qquad \text{for~$k \geq 1$},
\end{equation}
where
\begin{equation}
\label{00krecursioneq2}
\left\{
\begin{array}{l}
A=X+(1-q^2)V,\\
B=q(q-1)VX-(q^3+q)V^2-qY,\\
C=q^3 V^2X+q^3(1-q^2)V^3,\\
D=-q^6V^4
\end{array}
\right\}.
\end{equation}
\end{lemma}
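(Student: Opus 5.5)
The plan is to derive the recursion from the multiplication formulas of Proposition~\ref{Xmultprop} and Proposition~\ref{Ymultprop}, which we may use because $\mathcal{H}^\alpha$ is commutative (Proposition~\ref{comalglemma}). The key simplification is the shift identity already used in the proof of Lemma~\ref{recursionlemma}:
\[
T^\alpha(a+1,b+1,c+2)=V^2T^\alpha(a,b,c).
\]
In particular $T^\alpha(1,1,k+1)=V^2a_{k-1}$ and $T^\alpha(1,2,k+1)=V^2T^\alpha(0,1,k-1)$. It is convenient to introduce the auxiliary sequence $b_k=T^\alpha(0,1,k)$ for $k\ge 2$. The argument then has three steps: express $b$ in terms of $a$ using $X$, express $b_{k+2}$ in terms of lower terms using $Y$, and eliminate $b$.

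\textbf{Step 1.} Apply Proposition~\ref{Xmultprop} with $(a,b,c)=(0,0,k)$. In the generic row ($a=b$, $b+2\le c-b$, i.e.\ $k\ge 2$) this gives
\[
Xa_k=a_{k+1}+q\,b_{k+1}+q^3V^2a_{k-1}+(q-1)Va_k .
\]
Hence $q\,b_{k+1}=(X-(q-1)V)a_k-a_{k+1}-q^3V^2a_{k-1}$, so $q$ times each $b$ is a polynomial expression in the $a$'s with coefficients in $\Z[V,X]$.

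\textbf{Step 2.} Apply Proposition~\ref{Ymultprop} with $(a,b,c)=(0,0,k)$, again in the generic row ($b+3\le c-b$). This expresses $Ya_k$ as
\[
Ya_k=b_{k+2}+2(q^2-q)V^2a_k+q^3V^2b_k+(q^2-q)Vb_{k+1},
\]
again after using the shift identity.

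\textbf{Step 3.} Multiply the relation of Step~2 by $q$ and substitute Step~1 for $qb_{k+2}$, $qb_{k+1}$ and $qb_k$. The result is a relation among $a_{k+2},a_{k+1},a_k,a_{k-1},a_{k-2}$ whose leading coefficient $-1$ on $a_{k+2}$ is a unit. After reindexing ($k\mapsto k+2$) and collecting terms via commutativity, this should give exactly $A,B,C,D$ in~\eqref{00krecursioneq2}. The $q^6V^4$ term arises from $q^3V^2\cdot q^3V^2a_{k-2}$, and $-qY$ comes from $qYa_k$. If the collected coefficients do not match~\eqref{00krecursioneq2} at first, I would recheck which row of the tables applies to each index before suspecting the argument. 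An alternative would be to also use the $X b_k$ formula, the $a+1=b$ rows of Table~\ref{XHptable}, to eliminate $b$ differently, but that should not be needed.

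The main obstacle is the boundary behaviour. The generic rows used above require $k$ large enough, roughly $k\ge 3$ for $Y$ and $k\ge 2$ for $X$. The step-1 identity is needed at indices down to $k-1$, and the claim is stated for all $k\ge 1$, i.e.\ it involves $a_1,\dots,a_5$. So I will check the small cases separately using the boundary rows of Tables~\ref{XHptable} and~\ref{YHptable}: the rows $b+1=c-b$ and $b+2=c-b$, including the extra $q^2$ contributions such as $q^3+2q^2+q$ and $q^3+q^2$. I will also check that in these cases terms like $b_2=T^\alpha(0,1,2)=Y$ behave consistently. Concretely, I would verify the lowest instance $k=1$ directly, either by computing $a_2,\dots,a_5$ from $a_1=X$ with the special rows, or by confirming that the boundary corrections cancel in the elimination. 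I expect this careful bookkeeping of which table row applies, rather than the elimination itself, to be the delicate part.
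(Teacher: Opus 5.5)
Your proposal is correct and follows the paper's proof essentially verbatim: the generic $X$-row at $(0,0,k)$ expresses $q\,T^\alpha(0,1,k+1)$ through $a_{k+1},a_k,a_{k-1}$, and the generic $Y$-row at $(0,0,k)$ for $k\ge 3$ is then eliminated against it, with the coefficients coming out to $A,B,C,D$. Clearing the factor $q$ up front only avoids the paper's formal $q^{-1}$ coefficients, and the boundary checks you anticipate are unnecessary, since Steps 1--3 use only generic rows and hold for all $k\ge 3$, which your shift $k\mapsto k+2$ turns into exactly the stated range $k\ge 1$.
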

\begin{proof}
Assume that~$k \in \Z$ and~$k \geq 2$. By Proposition~\ref{Xmultprop}
we have
\begin{align*}
X T^\alpha (0,0,k) & =  T^\alpha (0,0,k+1) + q  T^\alpha (0,1,k+1) +
 q^3  T^\alpha (1,1,k+1) + (q-1) V  T^\alpha (0,0,k).
\end{align*}
Solving for $T^\alpha (0,1,k+1)$, we obtain
\begin{align*}
 T^\alpha (0,1,k+1)
& = q^{-1} X  T^\alpha (0,0,k) - q^{-1} T^\alpha (0,0,k+1) - q^2  T^\alpha (1,1,k+1)\\
&\quad -q^{-1} (q-1) V  T^\alpha (0,0,k)\\
& = q^{-1} X  T^\alpha (0,0,k) - q^{-1} T^\alpha (0,0,k+1) - q^2 V^2  T^\alpha (0,0,k-1)\\
&\quad -q^{-1} (q-1) V  T^\alpha (0,0,k)\\
& = - q^{-1} T(0,0,k+1) +q^{-1} \left(X - (q-1) V\right) T^\alpha (0,0,k) \\
&\quad - q^2 V^2  T^\alpha (0,0,k-1)\\
& = -q^{-1} a_{k+1} +  q^{-1} \left( X-(q-1)V \right)  a_k  -q^2 V^2 a_{k-1}.
\end{align*}
In summary,
\begin{equation}
\label{00krecursioneq134}
 T^\alpha (0,1,k+1)
= a a_{k+1} + b a_k + c a_{k-1} \qquad \text{for $k \geq 2$},
\end{equation}
where
$$
a = -q^{-1}, \qquad b = q^{-1} \left( X-(q-1)V \right) , \qquad c = -q^2 V^2.
$$
Further assume that~$k \geq 3$. Then by Proposition~\ref{Ymultprop},
\begin{align*}
Ya_k & = Y T^\alpha (0,0,k)\\
& =  T^\alpha (0,1,k+2) + 2(q^2-q)  T^\alpha (1,1,k+2) \\
&  \quad + q^3  T^\alpha (1,2,k+2)  + (q^2-q) V  T^\alpha (0,1,k+1) \\
& =  T^\alpha (0,1,k+2) + 2(q^2-q) V^2  T^\alpha (0,0,k) \\
&  \quad + q^3 V^2  T^\alpha (0,1,k)  + (q^2-q) V  T^\alpha (0,1,k+1) \\
& =  a a_{k+2} + b a_{k+1} + c a_{k} \\
&\quad+ 2(q^2-q) V^2 a_k \\
&\quad+ q^3 V^2 \left( a a_{k} + b a_{k-1} + c a_{k-2} \right) \\
&\quad+ (q^2-q) V \left( a a_{k+1} + b a_k + c a_{k-1} \right)\\
&= a a_{k+2}\\
&\quad+ \left(  (q^2-q) V a+  b \right) a_{k+1} \\
&\quad+ \left(2(q^2-q) V^2+ q^3 V^2 a + (q^2-q) V b + c \right) a_k\\
&\quad+ \left(q^3 V^2 b + (q^2-q) V c  \right) a_{k-1}\\
&\quad+ q^3 V^2 c a_{k-2}.
\end{align*}
Solving for~$aa_{k+2}$, we obtain:
\begin{align*}
a a_{k+2}
&=Y a_k \\
&\quad- \left( (q^2-q) V a+  b \right) a_{k+1} \\
&\quad- \left(2(q^2-q) V^2+ q^3 V^2 a + (q^2-q) V b + c \right) a_k\\
&\quad- \left(q^3 V^2 b + (q^2-q) V c  \right) a_{k-1}\\
&\quad- q^3 V^2 c a_{k-2}\\
&=Y a_k \\
&\quad+ \left( -(q^2-q) V a-  b \right) a_{k+1} \\
&\quad+ \left(-2(q^2-q) V^2 -q^3 V^2 a - (q^2-q) V b - c \right) a_k\\
&\quad+ \left(-q^3 V^2 b - (q^2-q) V c  \right) a_{k-1}\\
&\quad- q^3 V^2 c a_{k-2}\\
&= \left( -(q^2-q) V a-  b \right) a_{k+1} \\
&\quad+ \left(Y-2(q^2-q) V^2 -q^3 V^2 a - (q^2-q) V b - c \right) a_k\\
&\quad+ \left(-q^3 V^2 b - (q^2-q) V c  \right) a_{k-1}\\
&\quad- q^3 V^2 c a_{k-2}.
\end{align*}
Recalling that~$a=-q^{-1}$, multiplying by~$-q$ yields:
\begin{align*}
a_{k+2}&= \left( q(q^2-q)  V a+ q  b \right) a_{k+1} \\
&\quad+ \left(-qY +2q(q^2-q) V^2+q^4 V^2 a +q (q^2-q) V b +q c \right) a_k\\
&\quad+ \left(q^4 V^2 b +q(q^2-q) V c  \right) a_{k-1}\\
&\quad+ q^4 V^2 c a_{k-2} \\
& = A a_{k+1} +B a_k + C a_{k-1} + D a_{k-2}.
\end{align*}
This expression for~$a_{k+2}$ holds for~$k \geq 3$ and implies~\eqref{00krecursioneq1}.
\end{proof}

\begin{lemma}
\label{Bkrecursion}
If~$k\geq 2$, then
\begin{align}
& B_{k+2} +(q+1)^{-1}((q^2-1) V-X) B_{k+1} + q^2 V^2 B_{k} \nonumber \\
&\qquad = (q+1)^{-1} q a_{k+2} +(q+1)^{-1}(q^2-q) V a_{k+1}  - (q+1)^{-1} q^2 V^2 a_{k}.
\label{Bkrecursioneq101}
\end{align}
\end{lemma}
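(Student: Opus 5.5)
Since $B_k=\sum_{b=0}^{\lfloor k/2\rfloor}T^\alpha(0,b,k)$ and the $V^2$-shift relation $T^\alpha(a+1,b+1,c+2)=V^2T^\alpha(a,b,c)$ (used already in the proof of Lemma~\ref{recursionlemma}) is available, the plan is to compute $XB_{k+1}$ directly with Proposition~\ref{Xmultprop} and then rearrange. First I would apply Proposition~\ref{Xmultprop} to each summand $T^\alpha(0,b,k+1)$ of $B_{k+1}$. The rows with $a=0$ are the relevant ones: $b=0$ uses the $a=b$ rows, $b=1$ the $a+1=b$ rows, and $b\ge 2$ the $a+1<b$ rows. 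The right side has five kinds of terms:
\begin{itemize}
\item[(i)] the $T^\alpha(0,b,k+2)$ terms, each with coefficient $1$;
\item[(ii)] the $T^\alpha(0,b+1,k+2)$ terms, with coefficient $q$ or $q+1$;
\item[(iii)] the $T^\alpha(1,b,k+2)$ terms, with coefficient $q^2$, or $2q^2$ when $b=1$; these rewrite as $V^2T^\alpha(0,b-1,k)$;
\item[(iv)] the $T^\alpha(1,b+1,k+2)=V^2T^\alpha(0,b,k)$ terms, with coefficient $q^3$ or a boundary variant;
\item[(v)] the terms $VT^\alpha(0,b,k+1)$, with coefficient $q-1$ for $b=0$ and $q^2-1$ for $b\ge1$.
\end{itemize}

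Next I would collect these terms. Terms (i) and (ii) together give roughly $(q+1)B_{k+2}$. The exceptions are $T^\alpha(0,0,k+2)=a_{k+2}$, which appears only once (coefficient $1$ rather than $q+1$), and the top boundary term with index $b=\lfloor (k+2)/2\rfloor$, where the $q+1$ entries of the table enter. Terms (iii) and (iv) together give approximately $(q^2+q^3)V^2B_k$, again with corrections at $b=0$ and at the top $b$. Terms (v) give $(q^2-1)VB_{k+1}$ minus a correction $(q^2-q)Va_{k+1}$ coming from $b=0$. The resulting identity should read
\[
XB_{k+1}=(q+1)B_{k+2}-q\,a_{k+2}+(q^2-1)VB_{k+1}-(q^2-q)Va_{k+1}+(q^3+q^2)V^2B_k+(\text{correction})\,V^2a_k .
\]
Dividing by $q+1$ and rearranging then yields \eqref{Bkrecursioneq101}, provided the $a_k$-correction works out to $-q^2V^2a_k$ after moving it across.

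The main obstacle is the boundary bookkeeping, handled by a case split on the parity of $k$. When $b$ is maximal, the table rows $b=c-b$ and $b+1=c-b$ have different coefficients, namely $q+1$ in place of $q$ and $q^3+q^2$ in place of $q^3$. One must check that these exactly account for the terms with the largest index in $B_{k+2}$ and $V^2B_k$, terms that would otherwise be missing or double counted. The term (iii) at $b=1$ carries $2q^2$, and it combines with the term (iv) at $b=0$ (coefficient $q^3$ or $q^3+2q^2+q$) to give the $V^2a_k$ coefficient. I would verify both parities separately, and also check small $k$ (where $b=1$ may itself be the top index), to confirm that the hypothesis $k\ge2$ suffices.
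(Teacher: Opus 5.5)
Your plan is correct, and if you carry it out it closes up. Put $c=k+1\ge 3$; then:
\begin{itemize}
\item The $b=0$ row of Table~\ref{XHptable} is always the generic one, so the $q^3+2q^2+q$ variant you worry about never occurs.
\item For $c$ odd, the top entries $q+1$ and $q^3+q^2$ supply exactly the terms $T^\alpha(0,(c+1)/2,c+1)$ and $q^2V^2T^\alpha(0,(c-1)/2,c-1)$ that are missing from (i) and (iii).
\item For $c$ even, the top row $b=c-b$ has $\tilde n_2=\tilde n_4=0$. It does not have $q+1$ and $q^3+q^2$, as your last paragraph slightly conflates; this matches the absent terms.
\item The $V^2a_k$ coefficient is $2q^2+q^3-(q^3+q^2)=q^2$.
\end{itemize}
Hence
\[
XB_{k+1}=(q+1)B_{k+2}-qa_{k+2}+(q^2-1)VB_{k+1}-(q^2-q)Va_{k+1}+(q^3+q^2)V^2B_k+q^2V^2a_k,
\]
which gives \eqref{Bkrecursioneq101} after rearranging and dividing by $q+1$.

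The paper takes a different route. It works with the unsymmetrized sums $C_k=\sum_{a=0}^{\lfloor k/2\rfloor}T(a,0,k)$, for which $B_k=C_k+\alpha(C_k)-a_k$. It computes $XC_k$ from Table~\ref{phatable1}, where the rows $a>b=0$ have uniform coefficients, and then applies $\alpha$ and adds. This leaves an extra term $qT^\alpha(0,1,k+1)$ coming from the summand $a=b=0$. The paper eliminates it with \eqref{00krecursioneq134}, which introduces an $Xa_k$ that cancels the $Xa_k$ implicit in $X(B_k+a_k)$.

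So the paper uses Table~\ref{XHptable} only through its simplest row $a=b$, via \eqref{00krecursioneq134}, and otherwise relies on Table~\ref{phatable1} from \cite{JLPR}. You instead use the $a+1=b$ rows with the coefficient $2q^2$. Those rows are just the sum of the $a<b$ and $a>b$ rows of Table~\ref{phatable1}, so this costs nothing. Your single-pass computation is the shorter argument, since it avoids both the $\alpha$-symmetrization and the appeal to \eqref{00krecursioneq134}.
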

\begin{proof}
For~$k\in\Z_{\ge 0}$ define
$$
C_k=\sum_{a=0}^{\lfloor k/2\rfloor}T(a,0,k).
$$
Then
$$
B_k=C_k+\alpha(C_k)-T(0,0,k)=C_k+\alpha(C_k)-a_k,
$$
where~$a_k$ is as in Lemma \ref{00krecursion}.
Let~$k\geq 3$. Let~$\delta_k=0$ if~$k$ is even and~$\delta_k=1$ if~$k$ is odd.
Let~$a\in \Z_{\geq 0}$ be such that~$0\le a\leq \lfloor k/2\rfloor$.
We have
\begin{align*}
XT(a,0,k)=&n_1(a,0,k)T(a,0,k+1)+n_2(a,0,k)T(a,1,k+1)\\
&+n_3(a,0,k)T(a+1,0,k+1)+n_4(a,0,k)T(a+1,1,k+1)\\
&+n_5(a,0,k)VT(a,0,k).
\end{align*}
 It follows that
\begin{align*}
XC_k=&\sum_{a=0}^{\lfloor k/2\rfloor}n_1(a,0,k)T(a,0,k+1)
+\sum_{a=0}^{\lfloor k/2\rfloor}n_2(a,0,k)T(a,1,k+1)
\\
&+\sum_{a=0}^{\lfloor k/2\rfloor}n_3(a,0,k)T(a+1,0,k+1)
+\sum_{a=0}^{\lfloor k/2\rfloor}n_4(a,0,k)T(a+1,1,k+1)
\\
&+V\sum_{a=0}^{\lfloor k/2\rfloor}n_5(a,0,k)T(a,0,k).
\end{align*}
Using Table~\ref{phatable1} we now calculate each of the five terms in this sum.
We have:
\begin{align*}
&\sum_{a=0}^{\lfloor k/2\rfloor} n_1(a,0,k)T(a,0,k+1)
\\
&\qquad =\left(\sum_{a=0}^{\lfloor (k+1)/2\rfloor}T(a,0,k+1)\right)
-\delta_kT(\lfloor (k+1)/2\rfloor,0,k+1)
\\
&\qquad =C_{k+1}-\delta_kT(\lfloor (k+1)/2\rfloor,0,k+1).
\end{align*}
And:
\begin{align*}
&\sum_{a=0}^{\lfloor k/2\rfloor}n_2(a,0,k)T(a,1,k+1)
\\
&\qquad =n_2(0,0,k)T(0,1,k+1)
+\sum_{a=1}^{\lfloor k/2\rfloor}n_2(a,0,k)T(a,1,k+1)
\\
&\qquad =qT(0,1,k+1)+q^2\sum_{a=1}^{\lfloor k/2\rfloor}T(a,1,k+1)
\\
&\qquad =qT(0,1,k+1)+q^2V^2\sum_{a=1}^{\lfloor k/2\rfloor}T(a-1,0,k-1)
\\
&\qquad =qT(0,1,k+1)+q^2V^2\sum_{a=0}^{\lfloor k/2\rfloor-1}T(a,0,k-1)
\\
&\qquad =qT(0,1,k+1)-\delta_kq^2V^2T(\lfloor (k-1)/2\rfloor,0,k-1)
+q^2V^2\sum_{a=0}^{\lfloor (k-1)/2\rfloor}T(a,0,k-1)
\\
&\qquad =qT(0,1,k+1)-\delta_kq^2V^2T(\lfloor (k-1)/2\rfloor,0,k-1)+q^2V^2C_{k-1}.
\end{align*}
And:
\begin{align*}
&\sum_{a=0}^{\lfloor k/2\rfloor}n_3(a,0,k)T(a+1,0,k+1)
\\
&\qquad=n_3(\lfloor k/2 \rfloor,0,k)T(\lfloor k/2 \rfloor+1,0,k+1)+\sum_{a=0}^{\lfloor k/2\rfloor-1}n_3(a,0,k)T(a+1,0,k+1)
\\
&\qquad=\delta_k(q+1)T(\lfloor k/2 \rfloor+1,0,k+1)+q\sum_{a=0}^{\lfloor k/2\rfloor-1}T(a+1,0,k+1)
\\
&\qquad=\delta_k(q+1)T(\lfloor k/2 \rfloor+1,0,k+1)+q\sum_{a=1}^{\lfloor k/2\rfloor}T(a,0,k+1)
\\
&\qquad=\delta_k(q+1)T(\lfloor k/2 \rfloor+1,0,k+1)
\\
&\qquad\quad+\left( q\sum_{a=0}^{\lfloor (k+1)/2\rfloor}T(a,0,k+1) \right) -qT(0,0,k+1)
-\delta_kqT(\lfloor (k+1)/2\rfloor, 0, k+1)
\\
&\qquad=\delta_k(q+1)T(\lfloor k/2 \rfloor+1,0,k+1)
+qC_{k+1}-qT(0,0,k+1)-\delta_kqT(\lfloor (k+1)/2\rfloor, 0, k+1).
\end{align*}
And:
\begin{align*}
\sum_{a=0}^{\lfloor k/2\rfloor}n_4(a,0,k)T(a+1,1,k+1)
&=V^2\sum_{a=0}^{\lfloor k/2\rfloor}n_4(a,0,k)T(a,0,k-1)
\\
&=V^2\sum_{a=0}^{\lfloor (k-1)/2\rfloor}n_4(a,0,k)T(a,0,k-1)
\\
&=\delta_kq^2V^2T(\lfloor (k-1)/2\rfloor,0,k-1)+q^3V^2\sum_{a=0}^{\lfloor (k-1)/2\rfloor}T(a,0,k-1)
\\
&=\delta_kq^2V^2T(\lfloor (k-1)/2\rfloor,0,k-1)+q^3V^2C_{k-1}.
\end{align*}
And:
\begin{align*}
V\sum_{a=0}^{\lfloor k/2\rfloor}n_5(a,0,k)T(a,0,k)
&=n_5(0,0,k)VT(0,0,k)+V\sum_{a=1}^{\lfloor k/2\rfloor}n_5(a,0,k)T(a,0,k)
\\
&=(q-1)VT(0,0,k)+(q^2-1)V\sum_{a=1}^{\lfloor k/2\rfloor}T(a,0,k)
\\
&=(q-q^2)VT(0,0,k)+(q^2-1)V\sum_{a=0}^{\lfloor k/2\rfloor}T(a,0,k)
\\
&=(q-q^2)VT(0,0,k)+(q^2-1)VC_{k}.
\end{align*}

Adding, we have
\begin{align*}
XC_k& = (q+1)C_{k+1}+(q^2-1)VC_{k}+(q^2+q^3)V^2C_{k-1}+qT(0,1,k+1)+(q-q^2)Va_k-qa_{k+1}
\\
&\quad+\delta_k(q+1)T(\lfloor k/2 \rfloor+1,0,k+1)-\delta_k(q+1)T(\lfloor (k+1)/2\rfloor, 0, k+1)
\\
& = (q+1)C_{k+1}+(q^2-1)VC_{k}+(q^2+q^3)V^2C_{k-1}+qT(0,1,k+1)-qa_{k+1}+(q-q^2)Va_k.
\end{align*}
Hence,
\begin{align*}
X(C_k+\alpha(C_k))
&=(q+1)(C_{k+1}+\alpha(C_{k+1}))+(q^2-1)V(C_k+\alpha(C_k))+(q^2+q^3)V^2(C_{k-1}+\alpha(C_{k-1}))
\\
&\quad+qT^\alpha(0,1,k+1)-2qa_{k+1}+2(q-q^2)Va_k.
\end{align*}
By~\eqref{00krecursioneq134},
$$
qT^\alpha(0,1,k+1)=-a_{k+1}+(X-(q-1)V)a_k-q^3V^2a_{k-1}.
$$
Therefore,
\begin{align*}
X(C_k+\alpha(C_k))
&=(q+1)(C_{k+1}+\alpha(C_{k+1}))+(q^2-1)V(C_k+\alpha(C_k))+(q^2+q^3)V^2(C_{k-1}+\alpha(C_{k-1}))
\\
&\quad-a_{k+1}+(X-(q-1)V)a_k-q^3V^2a_{k-1}-2qa_{k+1}+2(q-q^2)Va_k\\
&=(q+1)(C_{k+1}+\alpha(C_{k+1}))+(q^2-1)V(C_k+\alpha(C_k))+(q^2+q^3)V^2(C_{k-1}+\alpha(C_{k-1}))
\\
&\quad-(2q+1)a_{k+1}+(X+(1+q-2q^2)V )a_k-q^3V^2a_{k-1}.
\end{align*}
Using $C_j+\alpha(C_j)=B_j+a_j$ for $j\in\Z_{\ge 0}$, we obtain
\begin{align*}
X(B_k+a_k)
&=(q+1)(B_{k+1}+a_{k+1})+(q^2-1)V(B_k+a_k)+(q^2+q^3)V^2(B_{k-1}+a_{k-1})
\\
&\quad-(2q+1)a_{k+1}+(X+(1+q-2q^2)V )a_k-q^3V^2a_{k-1}.
\end{align*}
This implies that
$$
 (q+1) B_{k+1} +((q^2-1) V-X) B_k + q^2(q+1) V^2 B_{k-1}
 = q a_{k+1} +(q^2-q) V a_k  -q^2 V^2 a_{k-1}.
$$
Dividing by~$q+1$, we have
\begin{align*}
& B_{k+1} +(q+1)^{-1}((q^2-1) V-X) B_k + q^2 V^2 B_{k-1}\\
&\qquad = (q+1)^{-1} q a_{k+1} +(q+1)^{-1}(q^2-q) V a_k  - (q+1)^{-1} q^2 V^2 a_{k-1}.
\end{align*}
This holds for~$k \geq 3$, and implies that~\eqref{Bkrecursioneq101} holds for~$k \geq 2$.
\end{proof}
We are now ready to show that the sequence ~$T(q^k)$ is recursive.

\begin{proposition}
\label{fullrecursion}
For~$k\in\Z_{\geq 0}$, let~$a_k$ and~$B_k$ be as in Lemma~\ref{00krecursion}
and Lemma~\ref{Bkrecursion}.
Define
\begin{equation}
\label{fullrecursioneq0p5}
\left\{
\begin{array}{l}
c_0=1,\\
c_1= (q^2-1) V-X,\\
c_2 = (q^3+q^2) V^2 + (q-q^2) V X + q Y,\\
c_3= (q^5-q^3) V^3 - q^3 V^2 X,\\
c_4=q^6 V^4.
\end{array}
\right\}
\end{equation}
Then
\begin{equation}
\label{fullrecursioneq1}
\sum_{i=0}^4c_iB_{k-i}=0
\qquad \text{for $k\ge 5$},
\end{equation}
and
\begin{equation}
\label{fullrecursioneq2}
\sum_{i=0}^4c_iT(q^{4-i})=\sum_{i=0}^4c_iT(q^{5-i}) = 0.
\end{equation}
\end{proposition}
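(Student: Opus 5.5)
The plan is to work with generating functions in $\mathcal{H}^\alpha[[t]]$, which is legitimate because $\mathcal{H}^\alpha$ is commutative (Proposition~\ref{comalglemma}). Write
\[
a(t)=\sum_{k\ge 0}a_kt^k, \qquad \mathbf{B}(t)=\sum_{k\ge0}B_kt^k, \qquad c(t)=\sum_{i=0}^4c_it^i .
\]
Proving \eqref{fullrecursioneq1} is then the same as showing that $c(t)\mathbf{B}(t)$ is a polynomial of degree at most $4$.

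\emph{Step 1: the two inputs.} Set
\[
P(t)=1-At-Bt^2-Ct^3-Dt^4
\]
with $A,B,C,D$ as in \eqref{00krecursioneq2}. Lemma~\ref{00krecursion} says that $P(t)a(t)$ is a polynomial of degree $\le 4$; its coefficients are computed from $a_0,\dots,a_4$. Set
\[
L(t)=1+(q+1)^{-1}((q^2-1)V-X)t+q^2V^2t^2,\qquad R(t)=(q+1)^{-1}\bigl(q+(q^2-q)Vt-q^2V^2t^2\bigr).
\]
Lemma~\ref{Bkrecursion} says that $L(t)\mathbf{B}(t)-R(t)a(t)$ is a polynomial of degree $\le 3$. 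Everything is to be carried out in $\Z[(q+1)^{-1}]\otimes\mathcal{H}^\alpha$, which is harmless because $V,X,Y$ are algebraically independent (Lemma~\ref{Hcircpolylemma}), so all identities can be checked as identities in $\Q[V,X,Y][t]$. Combining the two inputs gives
\[
L(t)P(t)\mathbf{B}(t)=\text{(polynomial of degree $\le 6$)}.
\]
This is a recursion of order $6$, whereas we want one of order $4$.

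\emph{Step 2: reduce the order.} The key step is a polynomial identity in $\Q[V,X,Y][t]$ relating $c(t)$ to $L(t)$, $P(t)$ and $R(t)$. Note that $c_1=-A$ and that $c_2$ differs from $-B$ only in the $V^2$ coefficient. So I expect the relation to be of the following shape. There is a common quadratic factor $M(t)$ with $L(t)=(\text{linear})\cdot(\text{linear})$ and $P(t)=M(t)N(t)$. Moreover $c(t)=L(t)N'(t)$ or $c(t)=M(t)L'(t)$ for suitable quadratics, so that the numerator $R(t)\,(P a)(t)$ acquires the missing factor. The first thing to try is to factor $L$ over $\Q[V,X]$, or failing that to compute $\gcd(L(t)P(t),c(t))$ and the quotient directly. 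One then checks that $c(t)$ divides $L(t)P(t)$ times something that cancels against the explicit low-degree numerator coming from $a_0,\dots,a_4$ and $B_0,\dots,B_3$. Concretely, I would compute the first several $a_k$ and $B_k$ as polynomials in $V,X,Y$ using Propositions~\ref{Xmultprop} and~\ref{Ymultprop}. For instance, $a_0=1$, $a_1=X$, and $a_2=T(0,0,2)$ is expressed through $X^2$, $Y$, $VX$, $V^2$. With these in hand I would verify that the degree-$\le6$ numerator $N_B(t)$ of $L(t)P(t)\mathbf{B}(t)$ satisfies $N_B(t)c(t)=L(t)P(t)\cdot(\text{deg}\le 4)$. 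I expect this cancellation, which makes the recursion drop from order $6$ to order $4$, to be the main obstacle. It is a finite but delicate computation, and it depends on the exact initial values.

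\emph{Step 3: the statement for $T(q^k)$.} Lemma~\ref{recursionlemma} gives $T(q^k)=B_k+VT(q^{k-1})$ for $k\ge2$. Hence $\sum_i c_iT(q^{k-i})$ equals $\sum_i c_iB_{k-i}$ plus $V\sum_i c_iT(q^{k-1-i})$, up to boundary terms at $k-i\le 1$. For $k=4,5$ all indices involved are small. So \eqref{fullrecursioneq2} reduces to a finite check: compute $T(q^0),\dots,T(q^5)$ from \eqref{Tqkalphaeq} together with the multiplication tables, and substitute. Since $c_0=1$, once \eqref{fullrecursioneq1} and \eqref{fullrecursioneq2} are known, induction on $k$ via Lemma~\ref{recursionlemma} propagates the vanishing to all $k\ge4$.
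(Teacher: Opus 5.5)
\paragraph{Main gap.} Your Step~2 is where the whole content of \eqref{fullrecursioneq1} sits, and you leave it as an expectation. The mechanism you conjecture does not occur: there is no common quadratic factor of $L(t)$ and $P(t)$, no identity $c(t)=L(t)N'(t)$, and no factorization of $L(t)$. In fact $L(t)$ does not factor over $\Q(V,X)$ and does not divide $c(t)$.

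The missing idea, which is how the paper argues, is to show first that the \emph{same} coefficients annihilate the $a_k$, i.e.\ $s_k=\sum_{i=0}^4c_ia_{k-i}=0$ for $k\ge5$. The paper gets this by noting that $(s_k)$ obeys the recursion of Lemma~\ref{00krecursion}, so four initial values suffice. Only then does one apply Lemma~\ref{Bkrecursion} termwise to $u_k=\sum_{i=0}^4c_iB_{k-i}$, which gives
\[
u_{k+2}+(q+1)^{-1}((q^2-1)V-X)\,u_{k+1}+q^2V^2u_k=(q+1)^{-1}\left(q\,s_{k+2}+(q^2-q)V\,s_{k+1}-q^2V^2s_k\right)=0
\]
for large $k$. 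Thus $(u_k)$ satisfies a second-order recursion, and \eqref{fullrecursioneq1} reduces to finitely many initial values, which the paper checks by computer.

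In your language: multiply the identity $L\mathbf{B}=Ra+r$ (with $r$ of degree $\le3$) by $c(t)$. Then $L\cdot(c\mathbf{B})=R\cdot(ca)+c\,r$ is a polynomial. The order drops because $c(t)a(t)$ is a polynomial, not because of a shared factor.

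\paragraph{The statement as printed is false.} You were one observation away from this, and from seeing that the printed statement cannot be proved. Besides $c_1=-A$, one also has $c_3=-C$ and $c_4=-D$. So with the printed $c_2$,
\[
c(t)=P(t)+(q^2-q)V^2t^2 .
\]
Hence Lemma~\ref{00krecursion} gives $\sum_ic_ia_{k-i}=(q^2-q)V^2a_{k-2}\neq0$ for $k\ge5$. More to the point, the printed and corrected versions of \eqref{fullrecursioneq1} (resp.\ \eqref{fullrecursioneq2}) differ by the nonzero element $(q^2-q)V^2B_{k-2}$ (resp.\ $(q^2-q)V^2T(q^{k-2})$), so at most one version can hold.

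The coefficient $(q^3+q^2)V^2$ in $c_2$ is a misprint for $(q^3+q)V^2$. The corrected $c_2$ equals $-B$, and it is the $t^2$-coefficient of the denominator $Q(t)$ displayed after Theorem~\ref{generalrationalitytheorem}. So the cancellation you planned to verify in Step~2 would simply fail for the literal statement.

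\paragraph{How your route closes after the correction.} With the corrected $c_2$ we have $c(t)=P(t)$, so the vanishing of $s_k$ is just Lemma~\ref{00krecursion}. Your generating-function route then closes by hand:
\begin{itemize}
\item $(q+1)R(t)=q(1-Vt)(1+qVt)$.
\item The values $B_0,\dots,B_3$ give $(q+1)r(t)=(1-Vt)(1+qVt)$.
\item The values $a_0,\dots,a_4$ give
\[
N_a:=Pa=1+(q^2-1)Vt-(Y+2q^2V^2)t^2+(2q^2V^2X-(q^4-q^2)V^3)t^3-(2q^5+q^4)V^4t^4 .
\]
\item One checks $qN_a+P=(q+1)L(t)(1-qVt)(1+q^2Vt)$.
\end{itemize}
Combining the first two items gives $(q+1)L\mathbf{B}=(1-Vt)(1+qVt)(1+q\,a)$. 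Multiplying by $P$ and using the last item, one cancels $(q+1)L(t)$ to get
\[
P(t)\mathbf{B}(t)=(1-Vt)(1-qVt)(1+qVt)(1+q^2Vt).
\]
Since $\mathbf{B}(t)=(1-Vt)\sum_kT(q^k)t^k$ exactly, both \eqref{fullrecursioneq1} and \eqref{fullrecursioneq2} follow.
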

\begin{proof}
We will first prove that
\begin{equation}
\label{fullrecursioneq2p5}
\sum_{i=0}^4c_ia_{k-i}=0\qquad \text{for~$k \geq 5$}.
\end{equation}
By~\eqref{00krecursioneq1}, if~$k \geq 5$, then
$$
\sum_{i=0}^4c_ia_{k-i+4}
=A\sum_{i=0}^4c_ia_{k-i+3}
+B\sum_{i=0}^4c_ia_{k-i+2}
+C\sum_{i=0}^4c_ia_{k-i+1}
+D\sum_{i=0}^4c_ia_{k-i}.
$$
Hence, to prove our claim~\eqref{fullrecursioneq2p5}, it suffices to prove that
\begin{equation}
\label{fullrecursioneq3}
\sum_{i=0}^4c_ia_{8-i}
=\sum_{i=0}^4c_ia_{7-i}
=\sum_{i=0}^4c_ia_{6-i}
=\sum_{i=0}^4c_ia_{5-i}=0.
\end{equation}
Using a computer algebra system,
and the multiplication tables
(Table~\ref{XHptable} and Table~\ref{YHptable})
one may directly verify~\eqref{fullrecursioneq3}; this proves~\eqref{fullrecursioneq2p5}.
To prove~\eqref{fullrecursioneq1}, first let~$k \geq 6$.
By~\eqref{Bkrecursioneq101}, we have:
\begin{align*}
&\sum_{i=0}^4 c_i B_{k-i+2}\\
&\qquad =  (q+1)^{-1}q\sum_{i=0}^4c_ia_{k-i+2}
+(q+1)^{-1}(q^2-q)V\sum_{i=0}^4c_ia_{k-i+1}
-(q+1)^{-1}q^2V^2\sum_{i=0}^4c_ia_{k-i}\\
&\qquad \quad -(q+1)^{-1}((q^2-1)V-X)\sum_{i=0}^4c_iB_{k-i+1}
-q^2 V^2\sum_{i=0}^4c_iB_{k-i}.
\end{align*}
By~\eqref{fullrecursioneq2p5}, the first three terms on the right hand side of the above expression are zero, and so we have
\begin{align*}
\sum_{i=0}^4c_iB_{k-i+2}=-(q+1)^{-1}((q^2-1)V-X)\sum_{i=0}^4c_iB_{k-i+1}
-q^2 V^2\sum_{i=0}^4c_iB_{k-i}.
\end{align*}
To prove~\eqref{fullrecursioneq1} it now suffices to show that
$$
\sum_{i=0}^4c_iB_{5-i}=\sum_{i=0}^4c_iB_{6-i}=0,
$$
which is verified using a computer algebra system. The
claim~\eqref{fullrecursioneq2} also follows from direct calculation using a computer algebra system.
\end{proof}

We now prove the main result of this section.

\begin{theorem}
\label{generalrationalitytheorem}
\label{generalrationality}
If~$k \geq 4$, then
\begin{equation}
\label{generalrationalitytheoremeq1}
\sum_{i=0}^4 c_i T(q^{k-i}) = 0
\end{equation}
where~$c_i$ for~$0 \leq i \leq 4$ are as in \eqref{fullrecursioneq0p5}.
\end{theorem}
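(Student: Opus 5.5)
The plan is a strong induction on $k$, using Lemma~\ref{recursionlemma} and Proposition~\ref{fullrecursion}. For brevity set $S_k=\sum_{i=0}^4 c_i T(q^{k-i})$ for $k\ge 4$. The cases $k=4$ and $k=5$ are exactly \eqref{fullrecursioneq2}, so we may assume $k\geq 6$ and that $S_{k-1}=0$.

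For $k\ge 6$, every index $k-i$ with $0\le i\le 4$ satisfies $k-i\ge 2$. Hence Lemma~\ref{recursionlemma} applies to each term and gives $T(q^{k-i})=B_{k-i}+VT(q^{k-i-1})$. Substituting into $S_k$ and using that the $c_i$ lie in the commutative ring $\mathcal{H}^\alpha$ together with $V$ (Proposition~\ref{comalglemma}), we obtain
\begin{equation*}
S_k=\sum_{i=0}^4 c_i B_{k-i} + V\sum_{i=0}^4 c_i T(q^{k-1-i}) = \sum_{i=0}^4 c_i B_{k-i} + V S_{k-1}.
\end{equation*}
The first sum vanishes by \eqref{fullrecursioneq1}, since $k\ge 5$, and the second vanishes by the induction hypothesis, since $k-1\ge 5$. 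Therefore $S_k=0$, which completes the induction.

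The argument has no real obstacle; all the substantive work has already been done in Proposition~\ref{fullrecursion}. The one point to check is that the index ranges line up. Lemma~\ref{recursionlemma} requires its argument to be at least $2$, which needs $k\ge 6$. The $B$-recursion holds for $k\ge 5$. The base cases $k=4,5$ are precisely those not reached by this reduction, and they are supplied by \eqref{fullrecursioneq2}.
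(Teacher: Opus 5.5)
Your proposal is correct and is essentially the paper's own proof. The paper also inducts from the base cases $k=4,5$ of \eqref{fullrecursioneq2}, applies Lemma~\ref{recursionlemma} (valid because $k-i\ge 2$ when $k\ge 6$), and kills the two resulting pieces with \eqref{fullrecursioneq1} and the induction hypothesis. Your explicit remark that commutativity of $\mathcal{H}^\alpha$ lets $V$ pass the $c_i$ states a step the paper uses without comment.
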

\begin{proof}

We will  prove that~\eqref{generalrationalitytheoremeq1} holds for~$k \geq 4$
by induction on~$k$. The cases~$k=4$ and~$k=5$ are~\eqref{fullrecursioneq2}. Assume
that~$k \geq 6$ and that~\eqref{generalrationalitytheoremeq1} holds for~$k-1$.
We have:
\begin{align}
\sum_{i=0}^4 c_i T(q^{k-i})
& = \sum_{i=0}^4 c_i  B_{k-i} + \sum_{i=0}^4 c_i  V T(q^{k-i-1})
\qquad \text{(by~\eqref{generalrationalitytheoremeq5}; note that~$k-i \geq 2$)} \nonumber \\
& = \sum_{i=0}^4 c_i  B_{k-i} +  V \sum_{i=0}^4 c_i  T(q^{(k-1)-i}) \nonumber \\
& = 0 \qquad \text{(by~\eqref{fullrecursioneq1} and the induction hypothesis)}. \nonumber
\end{align}
It follows that~\eqref{generalrationalitytheoremeq1} holds for~$k \geq 4$.
\end{proof}

As mentioned at the beginning of this section, Theorem~\ref{generalrationalitytheorem}
implies that the  power series~$\sum_{k=0}^\infty T(q^k) t^k$ in the
indeterminate~$t$ is a rational function in~$t$.
In fact, by~\eqref{recideq},~\eqref{fullrecursioneq0p5}, and~\eqref{fullrecursioneq2p5},
we have
$$
\sum_{k=0}^\infty T(q^k) t^k=\frac{P(t)}{Q(t)},
$$
where
$$
P(t)=
1+ q^2 V t -q^2 V^2 t^2 - q^4 V^3 t^3 =
(1 - q  V t) (1 + q  V t) (1 + q^2  V t)
$$
and
\begin{align*}
Q(t) & = 1 +
 \left((-1 + q^2) V - X\right)t
+ q  \left((1 + q^2) V^2 + V (X - q X) + Y\right) t^2\\
&\quad +q^3  V^2 \left((-1 + q^2) V - X\right) t^3  + q^6  V^4 t^4.
\end{align*}
After some algebra, this gives the same results as in \cite{JLPR} for~$\sum_{k=0}^\infty \chi(T(q^k)) t^k$
in the case of characters~$\chi$ of type~$1$ and type~$2$.

\section{A quaternion algebra realization}
\label{sec5}
\label{quaternionalgsec}
We end the paper by exploring an interesting observation about the structure of the Hecke algebra ~$\mathcal H$.
Let~$\Z[2^{-1}]$ be the commutative subring of~$\Q$
generated by~$\Z$ and~$2^{-1}$. In this section we
prove that the graded ring~$\Z[2^{-1}] \otimes_\Z \mathcal{H}$
is isomorphic to a split graded quaternion algebra over~$\Z[2^{-1}] \otimes_\Z Z(\mathcal{H})$, where~$Z(\mathcal{H})$ is the center of~$\mathcal{H}$.
More precisely, we will define an abstract commutative graded polynomial ring~$R$ (which will be isomorphic to~$\Z[2^{-1}] \otimes_\Z \mathcal{H}$),
a graded quadratic~$R$-algebra~$S$ with involution~$\sigma \colon S \to S$ (which will be isomorphic to~$\Z[2^{-1}] \otimes_\Z \mathcal{H}^\alpha$),
and an element~$z$ in~$R$ such that~$\Z[2^{-1}] \otimes_\Z \mathcal{H}$
is isomorphic to the graded quaternion algebra~$A$ over~$R$ defined by~$\sigma$,~$z$,
and~$S$; we will prove that the quaternion algebra~$A$ is split in the sense that~$A$ admits
an embedding into the~$R$-algebra~$\Mat(2,R)$ of~$2 \times 2$ matrices
over~$R$. As a consequence, we obtain a concrete embedding
of~$\Z[2^{-1}] \otimes_\Z \mathcal{H}$ into~$\Mat(2,R)$: see
Corollary~\ref{Hquatcor}.

We begin by letting~$R$ be the commutative polynomial ring in the indeterminates~$z_1$,~$z_2$,
and~$z_3$ over the commutative ring~$\Z[2^{-1}]$. We give~$R$ a~$\Z$-grading
by letting~$R_n=0$ for~$n \in \Z$ with~$n<0$, and assigning~$z_1$ degree~$1$
and~$z_2$ and~$z_3$ degree~$2$.
Letting~$t$ be another indeterminate, define a commutative~$R$-algebra~$S$ by
$$
S = R[t]/\left(t^2 - z_3 \right).
$$
Let~$\mathbf{i}$ be the image of~$t$ in~$S$. Then
\begin{equation}
\label{Sdefeq}
S = R \oplus R\, \mathbf{i}
\end{equation}
as~$R$-modules,
and~$\mathbf{i}^2 = z_3$.  Define~$\sigma \colon S \to S$ by the
formula~$\sigma (a+b\,\mathbf{i}) = a-b\,\mathbf{i}$
for~$a,b \in R$. Then~$\sigma $ is an involution on~$S$.
Define~$\tr_S \colon R \to R$ and~$\nm_S \colon R \to R$
by~$\tr_S(x) = x+\sigma (x)$ and~$\nm_S (x) = x \sigma (x)$ for~$x \in R$.
Evidently, the elements of~$R$ are fixed by~$\sigma$
and~$\tr_S(x)$ and~$\nm_S(x)$ are in~$R$ for all~$s \in S$. In the language
of~\cite{Knus}, Chapter~I, Section~1,~$\sigma$ is a standard
involution and~$S$ is a quadratic~$R$-algebra. For~$n \in \Z$
we define
\begin{equation}
\label{Sgradingdefeq}
S_n = R_n \oplus R_{n-1} \mathbf{i}.
\end{equation}
Then~$S_m S_n \subset S_{m+n}$ for~$m,n \in \Z$, so
that~$S$ is a~$\Z$-graded~$R$-algebra. We note that~$\mathbf{i}$
has degree~$1$ and~$\sigma$
preserves this grading, i.e.,~$\sigma(S_n) \subset S_n$
for~$n \in \Z$.
For the next lemma, we recall from Theorem~5.5 of \cite{JLPR} that the center~$Z(\mathcal{H})$
is generated by the three elements
\begin{equation}
\label{Zedefeq}
Z_1 = X - (q^2-1) V, \qquad Z_2 = (q-1) VX - (Y_1+Y_2), \qquad Z_3 = V^2,
\end{equation}
and that~$Z_1$,~$Z_2$, and~$Z_3$ are algebraically independent over~$\Z$.
Also, it is evident that~$Z(\mathcal{H}) \subset \mathcal{H}^\alpha$,
so that~$\mathcal{H}^\alpha$ may be regarded as a~$Z(\mathcal{H})$-algebra.
By Proposition~\ref{comalglemma} and Lemma~\ref{Hcircpolylemma},
the elements~$V$,~$X$, and~$Y=Y_1 + Y_2$ are contained in~$\mathcal{H}^\alpha$,
generate~$\mathcal{H}^\alpha$, and are algebraically independent over~$\Z$.

\begin{lemma}
\label{jisolemma}
Define
$$
\iota \colon R
\longrightarrow
\Z \left[ 2^{-1} \right] \otimes_\Z Z (\mathcal{H})
$$
by defining
$$
\iota (z_1) = Z_1, \qquad \iota (z_2) = Z_2, \qquad \iota (z_3) = Z_3.
$$
Extend~$\iota$ to a function
\begin{equation}
\label{jisolemmaeq1}
\iota \colon S
\longrightarrow
\Z \left[ 2^{-1} \right] \otimes_\Z \mathcal{H}^\alpha
\end{equation}
by the formula~$\iota (a +b\, \mathbf{i}) = \iota (a) + \iota (b) V$ for~$a,b \in R$.
Then~$\iota$ is an isomorphism of graded rings
such that~$\iota(R) =  \Z \left[ 2^{-1} \right] \otimes Z (\mathcal{H})$.
\end{lemma}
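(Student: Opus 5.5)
The plan is to first handle $R$, then check that $\iota$ is a ring homomorphism on $S$, and finally prove bijectivity by comparing $\Z[2^{-1}]$-bases.

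\textbf{The restriction to $R$.} Since $Z_1,Z_2,Z_3$ generate $Z(\mathcal{H})$ and are algebraically independent over $\Z$ (Theorem~5.5 of \cite{JLPR}), the map $z_i \mapsto Z_i$ is an isomorphism $\Z[z_1,z_2,z_3] \to Z(\mathcal{H})$. Tensoring with the flat $\Z$-module $\Z[2^{-1}]$ gives an isomorphism $R \to \Z[2^{-1}]\otimes_\Z Z(\mathcal{H})$. Because $Z(\mathcal{H})$ is contained in $\mathcal{H}^\alpha$, this map lands in $\Z[2^{-1}]\otimes_\Z \mathcal{H}^\alpha$. The grading is preserved, since $Z_1\in\mathcal{H}_1$ and $Z_2,Z_3\in\mathcal{H}_2$ match the degrees of $z_1,z_2,z_3$. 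This gives $\iota(R)=\Z[2^{-1}]\otimes Z(\mathcal{H})$, provided we know that the image of $R\,\mathbf{i}$ meets this central part only in $0$; that follows from the direct sum decomposition established below.

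\textbf{$\iota$ is a graded ring homomorphism on $S$.} The extended $\iota$ is $R$-linear. By construction $\iota(\mathbf{i})=V$ and $\iota(\mathbf{i}^2)=\iota(z_3)=Z_3=V^2$. Hence the homomorphism $R[t]\to \Z[2^{-1}]\otimes\mathcal{H}^\alpha$ sending $t\mapsto V$ kills $t^2-z_3$. It therefore factors through $S$, and the induced map is exactly $a+b\,\mathbf{i}\mapsto \iota(a)+\iota(b)V$; here we use that $\mathcal{H}^\alpha$ is commutative by Proposition~\ref{comalglemma}. Gradings are preserved because $\mathbf{i}$ and $V$ both have degree $1$.

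\textbf{Bijectivity.} By Proposition~\ref{comalglemma} and Lemma~\ref{Hcircpolylemma}, $\mathcal{H}^\alpha=\Z[V,X,Y]$ is a polynomial ring. Substituting $X=Z_1+(q^2-1)V$ and $Y=(q-1)VX-Z_2$ shows that $\mathcal{H}^\alpha=\Z[V,Z_1,Z_2]$. Algebraic independence of $V,Z_1,Z_2$ then follows from that of $V,X,Y$, since the substitution is triangular and invertible over $\Z$. Consequently $\mathcal{H}^\alpha$ is free over $\Z[Z_1,Z_2,V^2]$ with basis $\{1,V\}$, that is,
\begin{equation*}
\mathcal{H}^\alpha = Z(\mathcal{H}) \oplus Z(\mathcal{H})\,V .
\end{equation*}
Tensoring with $\Z[2^{-1}]$ and comparing with $S=R\oplus R\,\mathbf{i}$ shows that $\iota$ is bijective, using the isomorphism on $R$ from the first step.

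\textbf{Main obstacle.} The delicate point is confirming that $\Z[V,Z_1,Z_2]$ is free over $\Z[Z_1,Z_2,V^2]$ on $\{1,V\}$, so that no hidden relation arises; this is where the independence of $V,X,Y$ is used. Everything else is formal. The factor $2^{-1}$ plays no role in this lemma and is presumably needed later, for the quaternion algebra.
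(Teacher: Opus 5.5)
Your proof is correct. It differs from the paper's mainly in how bijectivity is obtained.

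\textbf{The paper's route.} The paper says it is straightforward that $\iota$ is a graded homomorphism. It then uses that $\mathcal{H}^\alpha=\Z[V,X,Y]$ is a polynomial ring (Proposition~\ref{comalglemma} and Lemma~\ref{Hcircpolylemma}) to \emph{define} an inverse candidate $\kappa$ on generators:
$\kappa(V)=\mathbf{i}$, $\kappa(X)=z_1+(q^2-1)\,\mathbf{i}$, $\kappa(Y)=-z_2+(q-1)^2(q+1)z_3+(q-1)z_1\,\mathbf{i}$.
It then checks that $\iota$ and $\kappa$ are mutually inverse.

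\textbf{Your route.} You make the homomorphism step precise via the universal property of $R[t]$. You then change variables triangularly to $V,Z_1,Z_2$ and derive the decomposition $\mathcal{H}^\alpha=Z(\mathcal{H})\oplus Z(\mathcal{H})V$. Finally you match this against $S=R\oplus R\,\mathbf{i}$ summand by summand.

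\textbf{Comparison.} Both arguments rest on the same inputs: the polynomial structure of $\mathcal{H}^\alpha$ and Theorem~5.5 of \cite{JLPR}. The paper's version produces explicit formulas for $\iota^{-1}(X)$ and $\iota^{-1}(Y)$, which are reused to compute $\sigma$ and in Corollary~\ref{Hquatcor}. Yours gives a structural fact, already over $\Z$: $\mathcal{H}^\alpha$ is free of rank two over the center with basis $1,V$, the analogue one level down of Lemma~\ref{qasupportlemma}. It also avoids verifying two compositions.

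\textbf{A small point.} Your proviso about $\iota(R\,\mathbf{i})$ meeting $\Z[2^{-1}]\otimes Z(\mathcal{H})$ only in $0$ is not needed for $\iota(R)=\Z[2^{-1}]\otimes Z(\mathcal{H})$. That equality already follows from your first step alone.
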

\begin{proof}
It is straightforward to verify that~$\iota$ is homomorphism of graded rings
and that
\begin{align}
&\iota (\mathbf{i})  = V, \label{jisolemmaeq2} \\
&\iota ( z_1 +(q^2-1)\, \mathbf{i})  = X, \label{jisolemmaeq3} \\
&\iota (-z_2+ (q+1) (q-1)^2 z_3 + (q-1) z_1\, \mathbf{i} )   = Y. \label{jisolemmaeq4}
\end{align}
Define a ring homomorphism~$\kappa \colon \Z\left[ 2^{-1} \right] \otimes_\Z \mathcal{H}^\alpha
\longrightarrow S$
by setting
\begin{align}
\kappa (V) & =\mathbf{i},\label{jisolemmaeq5}  \\
\kappa (X) & = z_1 +(q^2-1)\, \mathbf{i},  \label{jisolemmaeq6} \\
\kappa (Y) &= -z_2 +(q-1)^2 (q+1) z_3 + (q-1) z_1\, \mathbf{i}. \label{jisolemmaeq7}
\end{align}
Using~\eqref{jisolemmaeq2},~\eqref{jisolemmaeq3},~\eqref{jisolemmaeq4},~\eqref{jisolemmaeq5},
~\eqref{jisolemmaeq6}, and~\eqref{jisolemmaeq7},
calculations show that~$\iota$ and~$\kappa$ are inverses of each other. Thus, the ring homomorphism~$\iota$
is an isomorphism.
\end{proof}

It follows from Lemma~\ref{jisolemma} that
there exists an involution on~$\Z \left[ 2^{-1} \right] \otimes \mathcal{H}^\alpha$, which we
also refer to as~$\sigma$, such that the following diagram commutes:
$$
\begin{CD}
\Z \left[ 2^{-1} \right] \otimes_\Z \mathcal{H}^\alpha @> \kappa >\sim > S \\
@V\sigma VV @VV \sigma V \\
Z \left[ 2^{-1} \right] \otimes_\Z \mathcal{H}^\alpha  @< \iota <\sim <  S
\end{CD}
$$
Calculations show that
\begin{align}
\sigma(V) & = -V, \label{sigforeq1} \\
\sigma(X) & = X - 2 (q^2-1) V,\label{sigforeq2} \\
\sigma (Y)  & = 2(q+1)(q-1)^2 V^2 - 2(q-1) VX +Y. \label{sigforeq3}
\end{align}

Next, define
\begin{align}
 \label{zdefineeq}
z& =   e^2  -f^2 z_3\in R
\end{align}
where~$e = z_2-\left(1 + q + q^2 + q^3 \right) z_3$ and~$f= (1+q) z_1$.
The element~$z$ has degree~$4$.
Let~$A$ be the quaternion algebra determined by~$S$,~$\sigma$,
and~$z$. Thus,~$A$ is a free~$S$-module of rank two, given as
\begin{equation}
\label{quaterniondefeq}
A = S \oplus S\, \mathbf{j},
\end{equation}
with multiplication determined by the conditions
\begin{equation}
\label{multruleseq}
\mathbf{j}^2 = z \quad \text{and} \quad \mathbf{j}\, s = \sigma(s)\, \mathbf{j}
\end{equation}
for~$s \in S$.
We have
$$
A = R \oplus R\, \mathbf{i} \oplus R\, \mathbf{j} \oplus R\, \mathbf{ij}.
$$
For~$n \in \Z$ define
\begin{equation}
\label{Andefeq}
A_n = S_n \oplus S_{n-2}\, \mathbf{j}.
\end{equation}
Then~$A_m A_n \subset A_{m+n}$ for~$m,n \in \Z$, and thus~$A$ is a~$\Z$-graded algebra
with the element~$\mathbf{j}$ having degree 2.
If~$n \in \Z$, then
\begin{equation}
\label{Angradingeq}
A_n = R_n \oplus R_{n-1} \, \mathbf{i} \oplus R_{n-2} \, \mathbf{j} \oplus R_{n-3} \,\mathbf{ij}.
\end{equation}
The standard involution~$\sigma$ on~$S$
extends to a standard involution, which we also call~$\sigma$, on the~$R$-algebra~$A$
via the formula~$\sigma( a+ b\, \mathbf{j}) = \sigma(a) - b\, \mathbf{j}$,
where~$a,b \in S$. If~$a,b,c,d \in R$, then
$$
\sigma( a + b\, \mathbf{i} + c\, \mathbf{j} + d\, \mathbf{ij} )
= a - b\, \mathbf{i} - c\, \mathbf{j} -d\, \mathbf{ij}.
$$
The standard involution~$\sigma$ preserves our grading on~$A$.
For later use, we note that the equation~\eqref{zdefineeq} implies that there is an embedding
of~$R$-algebras
\begin{equation}
\label{Aembedeq}
\nu \colon A \hookrightarrow \Mat(2,R)
\end{equation}
that is determined by the assignments
\begin{equation}
\label{ijmapeq}
\nu(\mathbf{i}) = \begin{bmatrix} & 1 \\ z_3 & \end{bmatrix}
\quad \text{and} \quad
\nu(\mathbf{j}) =
\begin{bmatrix}
e & f \\ - f z_3 & - e
\end{bmatrix}.
\end{equation}
\begin{lemma}
\label{qasupportlemma}
There is a direct sum decomposition
\begin{equation}
\label{qasupportlemmaeq3}
\Z \left[ 2^{-1} \right] \otimes_\Z \mathcal{H}
=
\left( \Z \left[ 2^{-1} \right] \otimes_\Z \mathcal{H}^\alpha  \right) \oplus
\left( \Z \left[ 2^{-1} \right] \otimes_\Z \mathcal{H}^\alpha \right) \left( Y_1-Y_2 \right).
\end{equation}
Define
\begin{equation}
\label{qasupportlemmaeq1}
Z=   \left( Z_2-\left(1 + q + q^2 + q^3 \right) Z_3\right)^2  -\left((1+q) Z_1\right)^2 Z_3
\in Z(\mathcal{H}).
\end{equation}
Then~$(Y_1-Y_2)^2 = Z$, and
\begin{equation}
\label{qasupportlemmaeq2}
\left( Y_1 -Y_2\right) U = \sigma (U) \left( Y_1 -Y_2\right)
\end{equation}
for~$U \in \Z \left[ 2^{-1} \right] \otimes_\Z  \mathcal{H}^\alpha$.
\end{lemma}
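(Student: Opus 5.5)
We prove the three assertions in the order (i) $(Y_1-Y_2)^2=Z$, (ii) the twisted commutation \eqref{qasupportlemmaeq2}, (iii) the direct sum decomposition. Throughout we work in $\Z[2^{-1}]\otimes_\Z\mathcal{H}$ and use the presentation of $\mathcal{H}$ from Theorem~3.10 of~\cite{JLPR}, in particular the relations (72)--(78) expressing $XY_i$, $Y_iX$, $VY_i$, $Y_1Y_2$ and $Y_2Y_1$.

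For (i), expand
\[
(Y_1-Y_2)^2=Y_1^2+Y_2^2-(Y_1Y_2+Y_2Y_1)=Y^2-2(Y_1Y_2+Y_2Y_1).
\]
By (77) and (78) of~\cite{JLPR}, $Y_1Y_2$ and $Y_2Y_1$ are explicit polynomials in $V$, $X$, $Y_1$, $Y_2$ divisible by $V$. Their sum is $\alpha$-invariant, so by Proposition~\ref{comalglemma} it is a polynomial in $V,X,Y$. We then transport to $S$ via $\kappa$ of Lemma~\ref{jisolemma}, writing $Y^2-2(Y_1Y_2+Y_2Y_1)=\iota(\cdot)$, and check that the image equals $e^2-f^2z_3$ with $e,f$ as in \eqref{zdefineeq}. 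This is a finite polynomial identity; the only input is the exact form of (77)--(78).

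For (ii), it suffices to treat $U\in\{V,X,Y\}$, since both sides of \eqref{qasupportlemmaeq2} are multiplicative in $U$ and $\sigma$ is a ring automorphism of the commutative ring $\Z[2^{-1}]\otimes\mathcal{H}^\alpha$. By \eqref{sigforeq1}--\eqref{sigforeq3}, we must show
\begin{gather*}
(Y_1-Y_2)V=-V(Y_1-Y_2),\\
(Y_1-Y_2)X=(X-2(q^2-1)V)(Y_1-Y_2),\\
(Y_1-Y_2)Y=\bigl(2(q+1)(q-1)^2V^2-2(q-1)VX+Y\bigr)(Y_1-Y_2).
\end{gather*}
The first follows from $VY_1=Y_2V$ (conjugation by $w$, i.e.\ $\alpha$, together with $V=KwK$ and $w$ normalizing $K$). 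The second is a rewriting of the commutation relations between $X$ and $Y_i$ in (73)/(74) of~\cite{JLPR}. For the third, $(Y_1-Y_2)(Y_1+Y_2)-Y(Y_1-Y_2)=2(Y_1Y_2-Y_2Y_1)$, so the claim reduces to an identity for $Y_1Y_2-Y_2Y_1$, which again comes from (77)--(78). We expect this third identity to be the main obstacle: it is where the precise coefficients in $\sigma(Y)$ must match the relations. If direct manipulation becomes unwieldy, we would verify it on the basis of Theorem~3.10 of~\cite{JLPR}, or by comparing coefficients in Theorems~\ref{3p5theorem} and~\ref{3p6theorem} applied to $T(0,0,0)$.

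For (iii), existence follows from Lemma~\ref{Hprimemodlemma}: $\mathcal{H}=\mathcal{H}^\alpha+\mathcal{H}^\alpha Y_1$. Over $\Z[2^{-1}]$ we have $Y_1=\tfrac12 Y+\tfrac12(Y_1-Y_2)$ with $Y\in\mathcal{H}^\alpha$, so
\[
\Z[2^{-1}]\otimes\mathcal{H}=\Z[2^{-1}]\otimes\mathcal{H}^\alpha+\bigl(\Z[2^{-1}]\otimes\mathcal{H}^\alpha\bigr)(Y_1-Y_2).
\]
For directness, note that $\alpha(Y_1-Y_2)=-(Y_1-Y_2)$, so $\alpha$ acts by $+1$ on the first summand and by $-1$ on the second. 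Since $2$ is invertible, an element lying in both summands satisfies $u=-u$, hence $u=0$.
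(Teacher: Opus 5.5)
Your proposal is correct and is essentially the paper's proof: the paper likewise expands $(Y_1-Y_2)^2=Y^2-2(Y_1Y_2+Y_2Y_1)$ using (78) of \cite{JLPR} and its $\alpha$-image, checks the twisted commutation only on the generators $V$, $X$, $Y$ (the third case is exactly relation (77), so it is not really an obstacle), and gets the direct sum from Lemma~\ref{Hprimemodlemma}, $Y_1=\tfrac12 Y+\tfrac12(Y_1-Y_2)$, and the $\pm 1$ action of $\alpha$. The differences are cosmetic: the paper compares with $Z$ by expanding $Z$ directly in $V,X,Y$ rather than transporting through $\kappa$, and the $X$--$Y_i$ relations it uses are (75)--(76) of \cite{JLPR}, whereas (73)--(74) are the $V$--$Y_i$ relations.
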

\begin{proof}
Using the definitions of~$Z_1$,~$Z_2$, and~$Z_3$ from~\eqref{Zedefeq}, we have
\begin{align*}
Z& =  \left( Z_2-\left(1 + q + q^2 + q^3 \right) Z_3\right)^2  -\left((1+q) Z_1\right)^2 Z_3 \\
& = \left( (q-1) VX -Y -\left(1 + q + q^2 + q^3 \right) V^2\right)^2  -\left((1+q) (X-(q^2-1) V)\right)^2 V^2 \\
& =4q^2 (q+1)^2 V^4 + 4q (q^2-1) V^3 X -
 4 q V^2 X^2 + 2(1 +  q +  q^2 +  q^3) V^2 Y + 2(1 -  q) V X Y + Y^2.
\end{align*}
We also have
\begin{align*}
&\left( Y_1 - Y_2 \right)^2\\
&\qquad  = Y_1^2 -Y_1Y_2-Y_2Y_1+Y_2^2 \\
&\qquad  = \left( Y_1+Y_2 \right)^2 -2 Y_1 Y_2 - 2 Y_2 Y_1 \\
&\qquad  =  \left( Y_1+Y_2 \right)^2 \\
& \qquad \quad + 2 \left( q^2 (q+1)^2 V^4 + q(q^2-1) V^3 X - q V^2 X^2 - (q-1)VXY_2
+  q(q+1) V^2 Y_1 +  (1+q^3) V^2 Y_2 \right)\\
& \qquad \quad + 2 \left( q^2 (q+1)^2 V^4 + q(q^2-1) V^3 X - q V^2 X^2 - (q-1)VXY_1
+  q(q+1) V^2 Y_2 +  (1+q^3) V^2 Y_1 \right)\\
& \qquad \quad \text{(by~(78) of~\cite{JLPR}; also,~$\alpha(Y_1) = Y_2$ and~$\alpha(Y_2) = Y_1$
by~\eqref{alphageneq})} \\
&\qquad  =  \left( Y_1+Y_2 \right)^2 +  4 q^2 (q+1)^2 V^4 +4 q(q^2-1) V^3 X - 4q V^2 X^2\\
& \qquad \quad + 2 \left(1+q+q^2+q^3 \right) V^2 (Y_1+Y_2) -2(q-1) VX (Y_1+Y_2)  \\
&\qquad  =  Y^2 +  4 q^2 (q+1)^2 V^4 +4 q(q^2-1) V^3 X - 4q V^2 X^2
+ 2 \left(1+q+q^2+q^3 \right) V^2 Y +2(1-q) VX Y.
\end{align*}
Comparing, we conclude that~$(Y_1-Y_2)^2=Z$.
By Lemma~\ref{Hprimemodlemma} we
have~$\mathcal{H} = \mathcal{H}^\alpha + \mathcal{H}^\alpha Y_1$.
Since~$Y_1$ is~$2^{-1} \left( Y_1+Y_2 \right) + 2^{-1} \left( Y_1 - Y_2 \right)$
and~$Y_1 + Y_2 \in \mathcal{H}^\alpha$, this implies that
\begin{equation}
\label{qasupportlemmaeq4}
\Z \left[ 2^{-1} \right] \otimes_\Z \mathcal{H}
=
\left( \Z \left[ 2^{-1} \right] \otimes_\Z \mathcal{H}^\alpha  \right) +
\left( \Z \left[ 2^{-1} \right] \otimes_\Z \mathcal{H}^\alpha \right) \left( Y_1-Y_2 \right).
\end{equation}
To see that the sum in~\eqref{qasupportlemmaeq4} is direct, let~$A,B \in \mathcal{H}^\alpha$,
and assume that~$A = B \left(Y_1-Y_2\right)$. Applying the automorphism~$\alpha$ of~$\mathcal{H}$,
we obtain~$A = B \alpha \left(Y_1-Y_2\right) = - B \left(Y_1 -Y_2 \right)$.
It follows that~$2A=0$, so that~$A$ is zero, and the sum is direct.
To prove that~\eqref{qasupportlemmaeq2} holds for all~$U \in \mathcal{H}^\alpha$
it will suffice to verify~~\eqref{qasupportlemmaeq2}
for the generators~$V$,~$X$, and~$Y=Y_1+Y_2$ of~$\mathcal{H}^\alpha$.
By~(73) and~(74) of~\cite{JLPR},
$$
(Y_1-Y_2) V
 = Y_1 V - Y_2 V
 = V Y_2 - V Y_1
 = \sigma(V) (Y_1-Y_2).
$$
And
\begin{align}
(Y_1-Y_2) X
& = Y_1 X - Y_2 X \nonumber \\
& = XY_1 +(1-q^2) VY_1 -(1-q^2) VY_2
-XY_2 +(1-q^2) VY_1 - (1-q^2) VY_2 \nonumber \\
& \quad \text{(by~(75) and~(76) of~\cite{JLPR})} \nonumber \\
& = X(Y_1-Y_2) -2(q^2-1) V(Y_1 -Y_2)\nonumber \\
& = \sigma(X)(Y_1-Y_2)\qquad \text{(by~\eqref{sigforeq2})}.\nonumber
\end{align}
Finally,
\begin{align}
(Y_1-Y_2) Y
& = (Y_1-Y_2) (Y_1+Y_2) \nonumber  \\
& = Y_1^2 -Y_2 Y_1 +Y_1Y_2 -Y_2^2 \nonumber \\
& = (q-1)^2(q+1) V^2 Y_1 - (q-1)^2 (q+1) V^2 Y_2 - (q-1) VXY_1 +(q-1) V XY_2 \nonumber \\
&\quad +Y_1^2 - Y_2^2 \qquad \text{(by~(77) of~\cite{JLPR})}, \nonumber
\end{align}
and
\begin{align}
\sigma(Y) (Y_1-Y_2)
& = \left( 2 (q+1) (q-1)^2 V^2 -2 (q-1) V X + Y_1+Y_2 \right)(Y_1-Y_2)
\qquad \text{(by~\eqref{sigforeq3})} \nonumber \\
& = 2 (q+1) (q-1)^2 V^2 Y_1 -2 (q-1) V XY_1 + Y_1^2+Y_2Y_1 \nonumber \\
& \quad -2 (q+1) (q-1)^2 V^2Y_2 +2 (q-1) V X Y_2- Y_1Y_2-Y_2^2 \nonumber\\
& = Y_2Y_1 - Y_1Y_2 + 2 (q+1) (q-1)^2 V^2 Y_1 -2 (q-1) V XY_1  \nonumber \\
& \quad -2 (q+1) (q-1)^2 V^2Y_2 +2 (q-1) V X Y_2 + Y_1^2-Y_2^2 \nonumber \\
& = -(q-1)^2(q+1) V^2 Y_1 +(q-1)^2 (q+1) V^2 Y_2 + (q-1) VXY_1 -(q-1) VXY_2\nonumber \\
& \quad + 2 (q+1) (q-1)^2 V^2 Y_1 -2 (q-1) V XY_1  \nonumber \\
& \quad -2 (q+1) (q-1)^2 V^2Y_2 +2 (q-1) V X Y_2 + Y_1^2-Y_2^2 \qquad \text{(by~(77) of~\cite{JLPR})} \nonumber \\
& = (q-1)^2(q+1) V^2 Y_1 -(q-1)^2 (q+1) V^2 Y_2 - (q-1) VXY_1 +(q-1) VXY_2\nonumber \\
& \quad   + Y_1^2-Y_2^2, \nonumber
\end{align}
so that~$(Y_1-Y_2) Y = \sigma(Y) (Y_1-Y_2)$.
\end{proof}

\begin{theorem}
\label{Hquaterniontheorem}
Extend the isomorphism~$\iota \colon S \stackrel{\sim}{\longrightarrow} \Z[2^{-1}] \otimes_\Z \mathcal{H}^\alpha$ to a function
\begin{equation}
\label{Hquaterniontheoremeq1}
\iota \colon A \longrightarrow \Z [2^{-1}] \otimes_\Z \mathcal{H}
\end{equation}
by defining
\begin{equation}
\label{Hquaterniontheoremeq1p5}
\iota (a + b\, \mathbf{j} ) = \iota (a) + \iota(b) \left(Y_1 - Y_2 \right)
\end{equation}
for~$a,b \in S$. Then~$\iota$ is an isomorphism of graded rings.
\end{theorem}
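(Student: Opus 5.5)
We prove the claim by checking three properties of $\iota$ in turn: it is bijective, it preserves the grading, and it is multiplicative. Throughout we use Lemma~\ref{jisolemma} and Lemma~\ref{qasupportlemma}.

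\emph{Bijectivity.} Since $A = S \oplus S\,\mathbf{j}$ as $R$-modules (indeed as left $S$-modules), the map $\iota$ in \eqref{Hquaterniontheoremeq1p5} is additive. Lemma~\ref{jisolemma} says $\iota|_S$ is a bijection onto $\Z[2^{-1}]\otimes_\Z \mathcal{H}^\alpha$. Hence $\iota$ maps $S$ bijectively onto the first summand of \eqref{qasupportlemmaeq3}. It maps $S\,\mathbf{j}$ onto $\left(\Z[2^{-1}]\otimes_\Z\mathcal{H}^\alpha\right)(Y_1-Y_2)$, and this map is injective: if $\iota(b)(Y_1-Y_2)=0$, the directness argument in the proof of Lemma~\ref{qasupportlemma} gives $\iota(b)=0$. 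That argument takes $A=0$ and uses that $\mathcal{H}$ is a free $\Z$-module, so $2$ is a non-zero-divisor on $\Z[2^{-1}]\otimes\mathcal{H}$. Therefore $b=0$. The direct sum decomposition \eqref{qasupportlemmaeq3} now shows $\iota$ is a bijection.

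\emph{Grading.} By Lemma~\ref{jisolemma}, $\iota(S_n)$ lies in degree $n$. Also $Y_1-Y_2\in\mathcal{H}_2$ and $\mathbf{j}$ has degree $2$. Comparing \eqref{Andefeq} with these facts gives $\iota(A_n)\subset \Z[2^{-1}]\otimes\mathcal{H}_n$. Bijectivity together with the direct sum decompositions then yields equality in each degree.

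\emph{Multiplicativity.} By additivity it suffices to show
\[
\iota\bigl((a+b\,\mathbf{j})(c+d\,\mathbf{j})\bigr) = \iota(a+b\,\mathbf{j})\,\iota(c+d\,\mathbf{j})
\]
for $a,b,c,d\in S$. Using \eqref{multruleseq}, the left side equals
\[
\iota\bigl(ac + b\sigma(d)z + (ad + b\sigma(c))\mathbf{j}\bigr).
\]
On the right, write $U=\iota(a)$, and so on, and $W=Y_1-Y_2$. Expand $(U+BW)(C+DW)$ using three facts:
\begin{itemize}
\item $WC=\sigma(C)W$ by \eqref{qasupportlemmaeq2};
\item $W^2=Z$ by Lemma~\ref{qasupportlemma};
\item $Z$ is central, and commutes with everything in $\mathcal{H}^\alpha$, which is commutative by Proposition~\ref{comalglemma}.
\end{itemize}
This gives
\[
UC + B\sigma(D)Z + \bigl(UD+B\sigma(C)\bigr)W.
\]
The two expressions agree provided $\iota(z)=Z$ and $\iota\circ\sigma=\sigma\circ\iota$ on $S$. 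The second holds by the very definition of $\sigma$ on $\Z[2^{-1}]\otimes\mathcal{H}^\alpha$ via the commutative diagram. The first follows by comparing \eqref{zdefineeq} with \eqref{qasupportlemmaeq1}, since $\iota(z_i)=Z_i$ and $\iota|_R$ is a ring homomorphism.

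The only delicate point is that $\sigma$ on $\Z[2^{-1}]\otimes\mathcal{H}^\alpha$ must be the one transported through $\iota$, so that the twist rule \eqref{qasupportlemmaeq2} matches $\mathbf{j}s=\sigma(s)\mathbf{j}$. This is guaranteed because \eqref{qasupportlemmaeq2} was verified using exactly the formulas \eqref{sigforeq1}–\eqref{sigforeq3}. Finally, $\iota(1)=1$, so $\iota$ is an isomorphism of graded rings.
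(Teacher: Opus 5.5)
Your treatment of multiplicativity, surjectivity and the grading agrees with the paper. The injectivity step, however, has a genuine gap.

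The directness argument in the proof of Lemma~\ref{qasupportlemma} starts from $A=B(Y_1-Y_2)$ with $A,B\in\mathcal{H}^\alpha$. It applies $\alpha$ to get $A=-B(Y_1-Y_2)$ and concludes $A=0$. This only shows that the two summands in \eqref{qasupportlemmaeq3} meet in zero; it says nothing about $B$. Taking $A=0$ makes it vacuous: from $B(Y_1-Y_2)=0$, applying $\alpha$ just returns $-B(Y_1-Y_2)=0$. So the decomposition gives you $\iota(a)=0$ and $\iota(b)(Y_1-Y_2)=0$. To conclude $b=0$ you still need that right multiplication by $Y_1-Y_2$ is injective on $\Z[2^{-1}]\otimes_\Z\mathcal{H}^\alpha$, and nothing you cite provides this.

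The missing idea, which is how the paper closes this step, is to multiply $\iota(b)(Y_1-Y_2)=0$ on the right by $Y_1-Y_2$. Using $(Y_1-Y_2)^2=Z$, this gives $\iota(b)Z=0$. By Proposition~\ref{comalglemma} and Lemma~\ref{Hcircpolylemma}, $\Z[2^{-1}]\otimes_\Z\mathcal{H}^\alpha$ is a polynomial ring over $\Z[2^{-1}]$ in $V,X,Y$, hence an integral domain. Also $Z=\iota(z)\neq 0$, because $z$ is a nonzero element of $R$ (its expansion contains the monomial $z_2^2$ with coefficient $1$) and $\iota|_R$ is injective. Hence $\iota(b)=0$, and $b=0$ by Lemma~\ref{jisolemma}. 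With this repair your proof goes through and coincides with the paper's.
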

\begin{proof}
The function~$\iota$ is clearly additive. A calculation
shows that~$\iota$ is multiplicative; for this, it is useful
to note that~$\iota(\sigma (s)) = \sigma (\iota (s)))$
for~$s \in S$ and also that~$\iota(z) = Z$. Thus,~$\iota$
is a ring homomorphism.
To prove that~$\iota$ is injective, assume that~$a,b \in S$
are such that~$\iota(a+b \, \mathbf{j})=0$.
Then~$ \iota(a) + \iota(b) (Y_1-Y_2) =0$,
and by~\eqref{qasupportlemmaeq3} we obtain~$\iota(a) =0$
and~$\iota(b) (Y_1-Y_2) =0$.
Since~$\iota|_S$ is injective by Lemma~\ref{jisolemma}
we have~$a=0$.
Multiplying~$\iota(b) (Y_1-Y_2) =0$ by~$Y_1-Y_2$
implies that~$\iota(b) Z =0$ since~$(Y_1-Y_2)^2=Z$.
The elements~$\iota(b)$ and~$Z$ are in~$\Z [2^{-1}] \otimes_\Z \mathcal{H}^\alpha$;
since this ring is an integral domain by Lemma~\ref{comalglemma}
and Lemma~\ref{Hcircpolylemma}, and since~$Z \neq 0$, we obtain~$b=0$.
The direct sum decomposition~\eqref{qasupportlemmaeq3}
also implies that~$\iota$ is surjective. To see that~$\iota$
preserves gradings, let~$n \in \Z$. Then by~\eqref{Angradingeq},
$$
\iota (A_n) =
\iota (R_n) + \iota (R_{n-1}) V + \iota (R_{n-2}) (Y_1-Y_2) +\iota (R_{n-3} ) V (Y_1-Y_2)
\subset
\Z \left[ 2^{-1} \right] \otimes_\Z \mathcal{H}_n
$$
since~$\iota|_R$ is a homomorphism of graded rings,~$V$
has degree~$1$, and~$Y_1-Y_2$ has degree~$2$.
It follows that~$\iota$ is an isomorphism of graded rings.
\end{proof}

To end the section, we present the following corollary, which details the embedding of~$\Z[2^{-1}] \otimes_\Z \mathcal{H}$ into~$\Mat(2,R)$.

\begin{corollary}
\label{Hquatcor}
For~$n \in \Z$ define
\begin{equation}
\label{Hquatcoreq1}
\Mat(2,R)_n = \begin{bmatrix} R_n & R_{n-1} \\ R_{n+1} & R_n \end{bmatrix}.
\end{equation}
With this definition,~$\Mat(2,R)$ is a $\Z$-graded ring. Let~$c$
be the composition
\begin{equation}
\label{Hquatcoreq2}
\Z \left[ 2^{-1} \right] \otimes_\Z \mathcal{H}
\stackrel{\iota^{-1}}{\longrightarrow} A \stackrel{\nu}{\hookrightarrow} \Mat(2,R),
\end{equation}
where~$\nu \colon A \hookrightarrow \Mat(2,R)$ is as in~\eqref{Aembedeq}.
The function~$c$ is an injective homomorphism of graded
rings such that
\begin{equation}
\label{Hquatcoreq3}
c(V) = \begin{bmatrix} &1 \\ z_3 & \end{bmatrix}, \qquad
c(X) =  \begin{bmatrix} z_1 & q^2 -1 \\ (q^2-1) z_3 & z_1 \end{bmatrix}
\end{equation}
and
\begin{equation}
\label{Hquatcoreq4}
c(Y_1) =
\begin{bmatrix}
-q (q+1) z_3 & q z_1 \\ -z_1 z_3 & -z_2 +(q^3+1) z_3
\end{bmatrix},
\qquad
c(Y_2) =
\begin{bmatrix}
 -z_2 +(q^3+1) z_3 & -z_1 \\
q z_1 z_3 & -q (q+1) z_3
\end{bmatrix}.
\end{equation}
\end{corollary}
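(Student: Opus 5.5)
The plan has three steps: check that the grading on $\Mat(2,R)$ is compatible with multiplication, show that $c$ is an injective graded homomorphism, and compute the images of the generators.

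\emph{The grading.} For the entries, the product of $\Mat(2,R)_m$ and $\Mat(2,R)_n$ has $(1,1)$ entry in $R_mR_n+R_{m-1}R_{n+1}\subset R_{m+n}$. The $(1,2)$ entry lies in $R_mR_{n-1}+R_{m-1}R_n\subset R_{m+n-1}$. The $(2,1)$ entry lies in $R_{m+1}R_n+R_mR_{n+1}\subset R_{m+n+1}$. The $(2,2)$ entry lies in $R_{m+1}R_{n-1}+R_mR_n\subset R_{m+n}$. Since $R$ is graded with $R_n=0$ for $n<0$, the $\Mat(2,R)_n$ are additive subgroups with $\Mat(2,R)=\oplus_n \Mat(2,R)_n$. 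Hence $\Mat(2,R)$ is a $\Z$-graded ring.

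\emph{Injectivity and compatibility with gradings.} Since $\iota$ is an isomorphism of graded rings by Theorem~\ref{Hquaterniontheorem}, it suffices to show that $\nu$ is an injective graded ring homomorphism. First I verify that the assignments \eqref{ijmapeq} define an $R$-algebra homomorphism. This amounts to the relations $\nu(\mathbf{i})^2=z_3$, $\nu(\mathbf{j})^2=z$, and $\nu(\mathbf{j})\nu(\mathbf{i})=-\nu(\mathbf{i})\nu(\mathbf{j})$; together with $R$-linearity these give $\mathbf{j}s=\sigma(s)\mathbf{j}$. For $\nu(\mathbf{j})^2$, the diagonal entries are $e^2-f^2z_3=z$ and the off-diagonal entries are $ef-fe=0$ and $-efz_3+efz_3=0$. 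For the anticommutation, $\nu(\mathbf{i})\nu(\mathbf{j})=\begin{bmatrix}-fz_3&-e\\ez_3&fz_3\end{bmatrix}$ and $\nu(\mathbf{j})\nu(\mathbf{i})=\begin{bmatrix}fz_3&e\\-ez_3&-fz_3\end{bmatrix}$, which are negatives of each other. For injectivity, write $\nu(a+b\mathbf{i}+c\mathbf{j}+d\mathbf{ij})$ explicitly. Its entries are $a+ce-dfz_3$, $b+cf-de$, $bz_3-cfz_3+dez_3$, and $a-ce+dfz_3$. If this matrix vanishes, then $a=0$ and $ce=dfz_3$. Since $R$ is a domain and $z_3\neq 0$, the $(2,1)$ entry gives $b=cf-de$, while the $(1,2)$ entry gives $b=de-cf$; hence $b=0$ and $cf=de$. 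Combining $ce=dfz_3$ with $cf=de$ gives $c(e^2-f^2z_3)=0$, so $cz=0$. Since $z\neq0$ (its $z_2^2$ term), $c=0$, and then $de=0$ forces $d=0$ because $e\neq0$. For the grading, note that $\nu(\mathbf{i})\in\Mat(2,R)_1$ and $\nu(\mathbf{j})\in\Mat(2,R)_2$, because $e,f\in R_2$ and $fz_3\in R_4$ while $\mathbf{j}$ has degree $2$. By \eqref{Angradingeq}, $\nu(A_n)\subset\Mat(2,R)_n$.

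\emph{The explicit images.} These follow from Lemma~\ref{jisolemma}. We have $\kappa(V)=\mathbf{i}$ and $\kappa(X)=z_1+(q^2-1)\mathbf{i}$, which give $c(V)$ and $c(X)$ directly. For $Y_1$ and $Y_2$, write $Y_1=\tfrac12(Y+(Y_1-Y_2))$ and $Y_2=\tfrac12(Y-(Y_1-Y_2))$. By \eqref{Hquaterniontheoremeq1p5}, $\iota^{-1}(Y_1-Y_2)=\mathbf{j}$, so
\[
c(Y_1)=\tfrac12\bigl(\nu(\kappa(Y))+\nu(\mathbf{j})\bigr), \qquad c(Y_2)=\tfrac12\bigl(\nu(\kappa(Y))-\nu(\mathbf{j})\bigr),
\]
where $\kappa(Y)$ is given by \eqref{jisolemmaeq7}. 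Then $\nu(\kappa(Y))$ has diagonal entries $-z_2+(q-1)^2(q+1)z_3$ and off-diagonal entries $(q-1)z_1$ and $(q-1)z_1z_3$. Using $e=z_2-(1+q+q^2+q^3)z_3$ and $f=(1+q)z_1$, the $(1,1)$ entry of $c(Y_1)$ is $\tfrac12\bigl((q^3-q^2-q+1)-(1+q+q^2+q^3)\bigr)z_3=-q(q+1)z_3$. The $(1,2)$ entry is $\tfrac12\bigl((q-1)+(q+1)\bigr)z_1=qz_1$, and the remaining entries, and those of $c(Y_2)$, follow in the same way.

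I expect no real obstacle in any of these steps; the only care needed is the sign bookkeeping in the last step and checking that $z\neq0$ and $e\neq0$ are used correctly in the injectivity argument.
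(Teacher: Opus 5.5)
Your proposal is correct and follows essentially the paper's route. In both, $c=\nu\circ\iota^{-1}$ is a composition of injective graded homomorphisms, and the images are read off from $\kappa$, from $\iota^{-1}(Y_1-Y_2)=\mathbf{j}$, and from $Y_1,Y_2=\tfrac12\bigl(Y\pm(Y_1-Y_2)\bigr)$. Your explicit check that $\nu$ is a well-defined, injective, graded $R$-algebra homomorphism fills in what the paper simply asserts when it introduces $\nu$.

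There is one small slip in the grading step. Since $f=(1+q)z_1$, you have $f\in R_1$ and $fz_3\in R_3$, not $R_2$ and $R_4$. These corrected degrees are exactly what $\nu(\mathbf{j})\in\Mat(2,R)_2$ requires, so your conclusion stands.
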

\begin{proof}
A straightforward calculation shows that~\eqref{Hquatcoreq1}
defines a grading for~$\Mat(2,R)$. The function~$c$
is an injective ring homomorphism because it is the composition
of injective ring homomorphisms.
We have
\begin{align}
c(V) &= \nu \left( \iota^{-1} (V) \right) \nonumber \\
&= \nu (\mathbf{i} ) \qquad \text{(by~\eqref{jisolemmaeq2})} \nonumber \\
& = \begin{bmatrix} & 1 \\ z_3 & \end{bmatrix} \text{(by~\eqref{ijmapeq})},\nonumber
\end{align}
and
\begin{align}
c(X) & = \nu \left( \iota^{-1} (X) \right) \nonumber \\
& = \nu (  z_1 +(q^2-1)\, \mathbf{i} ) \qquad\text{(by \eqref{jisolemmaeq3})} \nonumber \\
& = \begin{bmatrix} z_1 & \\ & z_1 \end{bmatrix}
+ \begin{bmatrix} & q^2-1 \\ (q^2-1) z_3 & \end{bmatrix}
\text{(by~\eqref{ijmapeq})}\nonumber \\
& = \begin{bmatrix} z_1 &  q^2-1 \\  (q^2-1) z_3  & z_1 \end{bmatrix}. \nonumber
\end{align}
This proves the formulas for~$c(V)$ and~$c(X)$ as in~\eqref{Hquatcoreq3}.
Next,
\begin{align}
c(Y_1-Y_2)
& = \nu \left( \iota^{-1} ( Y_1 - Y_2) \right) \nonumber  \\
& =  \nu ( \mathbf{j} ) \qquad \text{(by~\eqref{Hquaterniontheoremeq1p5})}\nonumber \\
& =
\begin{bmatrix}
z_2 -(1+q+q^2+q^3) z_3 & (1+q) z_1 \\
- (1+q) z_1 z_3 & - \left( z_2 -(1+q+q^2+q^3) z_3 \right)
\end{bmatrix} \qquad \text{(by~\eqref{ijmapeq})}, \label{Hquatcoreq5}
\end{align}
and
\begin{align}
c(Y_1+Y_2)
& = \nu \left( \iota^{-1} (Y) \right) \nonumber \\
& = \nu \left( -z_2+ (q+1) (q-1)^2 z_3 + (q-1) z_1 \, \mathbf{i} \right)
\qquad \text{(by~\eqref{jisolemmaeq4})} \nonumber \\
& =
\begin{bmatrix}
-z_2+ (q+1) (q-1)^2 z_3 & \\
& -z_2+ (q+1) (q-1)^2 z_3
\end{bmatrix}
+
\begin{bmatrix}
(q-1) z_1  & \\
& (q-1) z_1
\end{bmatrix} \nu (\mathbf{i}) \nonumber \\
& =
\begin{bmatrix}
-z_2+ (q+1) (q-1)^2 z_3 & (q-1) z_1 \\
(q-1) z_1 z_3 &-z_2+ (q+1) (q-1)^2 z_3
\end{bmatrix}
\qquad \text{(by~\eqref{ijmapeq})}. \label{Hquatcoreq6}
\end{align}
The formulas for~$c(Y_1)$ and~$c(Y_2)$ in~\eqref{Hquatcoreq4}
follow from~\eqref{Hquatcoreq5} and~\eqref{Hquatcoreq6}.
Finally, that~$c$ respects the gradings
of the rings~$\Z \left[ 2^{-1} \right] \otimes_\Z \mathcal{H}$
and~$\Mat(2,R)$ follows from~\eqref{Hquatcoreq3} and~\eqref{Hquatcoreq4}.
\end{proof}

\bibliographystyle{plain}
\bibliography{PHA2}

\end{document}